\newcommand     {\C}            {{\mathbb C}}
\newcommand     {\F}            {{\mathbb F}}
\newcommand     {\Hom}          {\sym{Hom}}
\newcommand     {\HP}           {{\mathbb H}}
\newcommand     {\Ind}          {\sym{Ind}}
\newcommand     {\Inv}          {\sym{Inv}}
\newcommand     {\JG}[2][n]     {\sym{J}_{#1}({#2})}
\newcommand     {\Mp}           {\mathbb{M}}
\newcommand     {\Mpp}[1][\Z]   {\sym{Mp}(2,{#1})}
\newcommand     {\Q}            {{\mathbb Q}}
\newcommand     {\Ring}[1]      {\Z/{#1}}
\newcommand     {\SL}[1][\Z]    {\sym{SL}(2,{#1})}
\newcommand     {\Z}            {{\mathbb Z}}
\newcommand     {\e}            {\sym{e}}
\newcommand     {\conj}[1]      {{#1}^c}
\newcommand     {\down}[1]      {{#1}'}
\newcommand     {\pt}[1]        {\sym{e}_{#1}}
\newcommand     {\sym}[1]       {\operatorname{#1}}
\newcommand     {\up}[1]        {\widetilde{#1}}
\theoremstyle{plain}
\newtheorem{Theorem}{Theorem}
\newtheorem*{Corollary}{Corollary}
\newtheorem{Lemma}{Lemma}
\theoremstyle{definition}
\newtheorem*{Definition}{Definition}
\newtheorem*{Remark}{Remark}
\title{%
  Jacobi Forms of Critical Weight and Weil Representations
}
\author{%
  Nils-Peter Skoruppa}
\date{}
\begin{document}

\maketitle

%\showversion

%%%%%%%%%%%%%%%%%%%%%%%%%%%%%%%%%%%%%%%%%%%%%%%%%%%%%%%%%%%%%%%%%%%%%%%%%%%%%
\begin{abstract}
\noindent
Jacobi forms can be considered as vector valued modular forms, and
Jacobi forms of critical weight correspond to vector valued modular
forms of weight $\frac12$. Since the only modular forms of weight
$\frac12$ on congruence subgroups of $\SL$ are theta series the theory
of Jacobi forms of critical weight is intimately related to the theory
of Weil representations of finite quadratic modules. This article
explains this relation in detail, gives an account of various facts
about Weil representations which are useful in this context, and it
gives some applications of the theory developed herein by proving
various vanishing theorems and by proving a conjecture on Jacobi forms
of weight one on $\SL$ with character.
%\footnotetext
%{\footnotesize
(2000 Mathematics Subject Classification:
11F03 %Modular and automorphic functions
11F50 %Jacobi forms
11F27 %Theta series; Weil representation
)
%}
\end{abstract}

%%%%%%%%%%%%%%%%%%%%%%%%%%%%%%%%%%%%%%%%%%%%%%%%%%%%%%%%%%%%%%%%%%%%%%%%%%%%%
\tableofcontents

%%%%%%%%%%%%%%%%%%%%%%%%%%%%%%%%%%%%%%%%%%%%%%%%%%%%%%%%%%%%%%%%%%%%%%%%%%%%%
\section{Introduction}

All Siegel modular forms of degree $n$ and weight $k<\frac n2$ are
singular. Similarly, every orthogonal modular form associated to a
quadratic module of signature~$(2,n+1)$ and of weight~$k<\frac n2$ is
singular. The weights in the given ranges are called
singular weights, a terminology, which was introduced by Resnikoff.
Spaces of modular forms of singular weight are well-understood
(cf.~\cite{Fr 91} and the literature cited therein for singular Siegel
modular forms, and~\cite{Re 75} for singular orthogonal modular
forms).  For critical weights much less is known.  Here, by
{\it critical weight}, we understand the weight $\frac n2$ in the theory of
Siegel modular forms of degree $n$ and in the theory of orthogonal
modular forms of signature $(2,n+1)$ respectively. \cite{We 92} gives some results
for Siegel modular forms of degree 2 and critical weight~$1$.  More
recently, it was shown in \cite{I-S 06} that there are no cusp forms
of degree 2 and weight 1 (with trivial character) on the subgroups~$\Gamma_0(l)$ of
$\sym{Sp}(2,\Z)$.

In a joint project Ibukiyama and the author~\cite{I-S 07} take up a
more systematic study of Siegel and orthogonal modular forms of
singular weight, a first result being~\cite{I-S 06}. One of the main
tools used in this project is the Fourier-Jacobi expansion of the
modular forms in question, more precisely, the Fourier-Jacobi
expansion with respect to Jacobi forms of degree~$1$. For modular
forms of critical weight the Fourier-Jacobi coefficients will then be
Jacobi forms whose index is a symmetric positive definite matrix of
size $n$ and whose weight equals $\frac {n+1}2$. We shall call this
weight {\it critical} for Jacobi forms of degree~$1$ whose matrix
index is of size $n$.

Jacobi forms of critical weight play also a role in recent work of
Gritsenko, Sankaran and Hulek on the the geometry of moduli spaces of
K3-surfaces. The explicit construction of Jacobi forms of small weight
is an essential tool in their studies. In particular, they use Jacobi
forms with scalar index of weight 1 (hence of critical weight) and
with a nice product expansion to construct those Jacobi forms which
they need. These Jacobi forms of weight 1 with product expansion are
special instances of socalled {\it thetablocks} (cf.~the discussion before
Theorem~\ref{thm:weight-one-vanishing} in section~\ref{sec:critical-weight} for a
more precise statement). It is an interesting question whether all
Jacobi forms of small weight (and of scalar index) can be obtained by thetablocks
as it is suggested by numerical experiments. For
Jacobi forms of weight $1/2$ this question can be
answered affirmatively using the description of these forms given in
Corollary to Theorem~\ref{thm:first-isomorphism} below
(cf.~\cite{GSZ 07} for details).  For Jacobi forms
of weight 1 (and of scalar index) we are similarly lead to the problem
of finding a sufficiently explicit description of these forms. Such a
description will drop out as part of the considerations of this
article. In fact, we shall show that all Jacobi forms of weight 1
with character $\varepsilon^8$ (see section~\ref{sec:Notations}) are linear combinations of
thetablocks.  A more systematic theory of thetablocks will be developed in
a joint article of V.~Gritsenko, D.~Zagier and the author \cite{GSZ 07}.
 
In view of the aformentioned applications it seems to be worthwhile
to develop a systematic theory of Jacobi forms of critical
weight. These Jacobi forms can be studied as vector valued elliptic
modular forms of (critical) weight~$\frac 12$. The first main tool
for setting up such a theory is
an explicit description of this connection. The second main tool
is then the fact that elliptic modular forms of weight~$\frac 12$ are
theta series~\cite{Se-S 77}, and the third main tool is the
decomposition of the space of modular forms of weight~$\frac 12$ with
respect to the action of the metaplectic double cover of
$\SL$~\cite{Sko 85}. The latter makes it possible to describe spaces of
vector valued elliptic modular forms of weight~$\frac 12$ in
sufficiently explicit form.  A deeper study of these explicit
descriptions requires a deeper understanding of Weil representations
of $\SL$ (or rather its metaplectic double cover) associated to finite
quadratic modules. In fact, it turns out that spaces of Jacobi forms
of critical weight are always naturally isomorphic to spaces of invariants
of suitable Weil representations
(cf.~Theorem~\ref{thm:third-isomorphism} and the subsequent remark).
 
The present article aims to pave the way for a theory of Jacobi forms of critical weight
and, in particular, for the aforementioned projects
\cite{I-S 07}, \cite{GSZ 07} and possible other applications.
We propose a formal framework for such a theory, we
give an account of what one can prove for Jacobi forms of critical weight within this framework,
and we describe the necessary tools from the theory of Weil representations and
the theory of modular forms of weight $\frac12$ which are needed.
Some of the following results are new,
others are older but are not easily available elsewhere
or are even unpublished.

The plan of this article is as follows: In
section~\ref{sec:weil-representations} we discuss Weil representations
of finite quadratic modules. All the considerations of this section
are mainly motivated by the question for the
invariants of a given Weil representation. 
In section~\ref{sec:connection-to-vector-valued-modular-forms}
the relation between (vector valued)
Jacobi forms and vector valued modular forms is made explicit.  As an
application we obtain (cf.~Theorem~\ref{thm:dimension-formula})
a dimension formula for spaces of Jacobi forms
(of arbitrary matrix index).
In section~\ref{sec:critical-weight} we finally turn to Jacobi forms
of critical weight.  We shall see that Jacobi forms of critical weight
are basically invariants of certain Weil representations associated to
finite quadratic modules (cf.~Theorem~\ref{thm:third-isomorphism} and
the subsequent remarks). We apply our theory to prove various
vanishing results in Theorems~\ref{thm:first-vanishing-result},
~\ref{thm:weight-one-vanishing}, Corollary to
Theorem~\ref{thm:vanishing-on-Gamma_0}, and to prove in
Theorem~\ref{thm:quarks} that Jacobi forms of weight 1 and character
$\varepsilon^8$ are obtained by theta blocks.  In
section~\ref{sec:proofs} we append those proofs which have been
omitted in the foregoing sections because of their more technical or
computational nature.  For the convenience of the reader we insert a
section~\ref{sec:Notations} which contains a glossary of the main
notations.

%%%%%%%%%%%%%%%%%%%%%%%%%%%%%%%%%%%%%%%%%%%%%%%%%%%%%%%%%%%%%%%%%%%%%%%%%%%%%
\section{Notation}
\label{sec:Notations}

We use $[a,b;c,d]$ for the $2\times2$ matrix with first and second row
equal to $(a,b)$ and $(c,d)$, respectively.  If $F$ is an $n\times n$
matrix and $x$ a column vector of size $n$, we write $F[x]$ for
$x^tFx$. We use $\e(X)$ for $\exp(2\pi i X)$, and $\e_m(X)$ for
$\exp(2\pi i X/m)$.  For integers $a$ and $b$, the notation
$a|b^\infty$ indicates that every prime divisor of $a$ is also a
divisor of $b$. For relatively prime $a$ and $b$ we use~$\left(\frac
ab\right)$ for the usual generalized Jacobi-Legendre Symbol with the
(additional) convention $\left(\frac a{2^\beta}\right)=-1$ if $a\equiv
\pm3\bmod8$ and $\beta$ is odd and  $\left(\frac a{2^\beta}\right)=+1$
if $a\equiv\pm1\bmod8$ or $\beta$ is even.
We summarize (in roughly alphabetical order) the most
important notations of this article:
\begin{list}{-}{\labelwidth=50pt}
\item[$\C(\chi)$]
For a character $\chi$ of the metaplectic cover $\Mp=\Mpp$, the
$\Mp$-module with underlying vector space $\C$ and with $\Mp$-action
$(g,z) \mapsto \chi(z)g$.
\item[$\varepsilon$]
The one dimensional character of $\Mp=\Mpp$ given by
$\varepsilon(A,w)=\eta(A\tau)/\big(w(\tau)\eta(\tau)\big)$, where
$\eta$ is the Dedekind eta function.
\item[$\vartheta(\tau,z)$]
The Jacobi form $q^{1/8}\big(\zeta^{1/2}-\zeta^{-1/2}\big)
\prod_{n\ge 1}
\big(1-q^n\big)
\big(1-q^n\zeta\big)
\big(1-q^n\zeta^{-1}\big)$.
\item[$\up\Gamma$]
For a subgroup $\Gamma$ of $\SL$ the inverse image
of $\Gamma$ in $\Mpp$ under the natural projection onto the first
factor.
\item[$\down\Gamma$]
For a subgroup $\Gamma$ of $\Mpp$ its image under the
natural projection onto the first factor.
\item[$\Gamma(4m)^*$]
The (normal) subgroup of $\Mpp$ consisting of all pairs
$\big(A,j(A,\tau)\big)$, where $A$ is in the principal congruence
subgroup $\Gamma(4m)$ of matrices which are $1$ modulo $4m$, and where
$j(A,\tau)$ stands for the standard multiplier system from the theory
of modular forms of half-integral weight,
i.e.~$j(A,\tau)=\theta(A\tau)/\theta(\tau)$ where
$\theta(\tau)=\sum_{r\in\Z}\exp(2\pi i \tau r^2)$.
\item[$\HP$]
The upper half plane of complex numbers.
\item[$\Inv(V)$]
The space of $\Mpp$-invariant elements in an $\Mpp$-module $V$.
\item[$M_k(\Gamma)$]
For a subgroup $\Gamma$ of $\SL$, the space of modular forms of weight
$k$ on $\Gamma$; if $k\in\frac12+\Z$ then it is assumed that $\Gamma$
is contained in $\Gamma_0(4)$, and every $f$ in $M_k(\Gamma)$ satisfies
$f(A\tau)j(A,\tau)^{-2k}=f(\tau)$ for $A\in\Gamma$ (cf.~above for
$j(A,\tau)$).
\item[$\Mp$]
Abbreviation for $\Mpp$, see below.
\item[$\Mpp$]
The metaplectic double cover of $\SL$, i.e.~the group of all pairs
$(A,w)$, where $A\in\SL$ and $w$ is a holomorphic function on $\HP$
such that $w(\tau)^2=c\tau+d$, equipped with the composition law
$(A,w)\cdot(B,v)=\big(AB,w(B\tau)v(\tau)\big)$.
\item[$S$]
The matrix $[0,-1;1,0]$.
\item[$T$]
The matrix $[1,1;0,1]$.
\item[$\vartheta_{F,x}$]
For a symmetric, half-integral positive definite matrix $F$
of size $n$ and a column vector $x\in\Z^n$,
$$
	 \vartheta_{F,x}(\tau,z)
	 =
	 \sum_{\begin{subarray}{c}r\in\Z^n\\
		  r\equiv x \bmod 2F\Z^n\end{subarray}}
	 \e\big(\tau \frac14 F^{-1}[r] + r^tz\big)
	 \qquad
	 (\tau\in \HP,\; z\in\C^n)
$$
\item[$V^*$]
For a left $G$-module $V$, the right $G$-module with the dual of the
complex vector space $V$ as underlying space equipped with the
$G$-action $(\lambda,g)\mapsto \big(v\mapsto\lambda(gv)\big)$.
\item[$\conj V$]
For a left $G$-module $V$, the left $G$-module with the dual of the
complex vector space $V$ as underlying space equipped with the
$G$-action $(g,\lambda)\mapsto \big(v\mapsto\lambda(g^{-1}v)\big)$.
\item[$w_A$]
For a matrix $A\in\SL$, the function $w_A(\tau)=\sqrt{a\tau+b}$, where
the square root is chosen in the right half plane or on the
nonnegative imaginary axes.
\end{list}

%%%%%%%%%%%%%%%%%%%%%%%%%%%%%%%%%%%%%%%%%%%%%%%%%%%%%%%%%%%%%%%%%%%%%%%%%%%%%
\section{Weil Representations of $\Mpp$}
\label{sec:weil-representations}

In this section we recall those basic facts about Weil representations
of $\Mp=\Mpp$ associated to finite quadratic modules which we shall need in
the sequel. These representations have been studied by \cite{Kl 46},
\cite{N-W 76}, \cite{Ta 67} et al..  A more complete account of this
theory as well as some deeper facts which can not be found in the
literature will be given in~\cite{Sko 07}.

By a finite quadratic module $M$ we understand a finite abelian group
$M$ endowed with a quadratic form $Q_M:M\rightarrow \Q/\Z$. Thus, by
definition, we have $Q_M(ax)=a^2Q_M(x)$ for all $x$ in $M$ and all integers
$a$, and the application $B_M(x,y):=Q_M(x+y)-Q_M(x)-Q_M(y)$ defines a
$\Z$-bilinear map $B_M:M\times M\rightarrow \Q/\Z$. All quadratic
modules occurring in the sequel will be assumed to be
non-degenerate if not otherwise stated. Recall that $M$ is called non-degenerate if
$B_M(x,y)=0$ for all $y$ is only possible for~$x=0$.

Denote by $\C[M]$ the complex vector space of all formal linear
combinations $\sum_x \lambda(x)\,\pt x$, where $\pt x$, for $x\in M$,
is a symbol, where $\lambda(x)$ is a complex number and where the sum
is over all $x$ in $M$.  We define an action of $(T,w_T)$ and
$(S,w_S)$ on $\C[M]$ by
\begin{equation*}
\label{eq:action}
\begin{split}
	 (T,w_T)\,\pt x &= e(Q_M(x))\,\pt x\\
	 (S,w_S)\,\pt x &= \sigma\,|M|^{-\frac12}\sum_{y\in M} \pt y\, e(-B_M(y,x))
	 ,
\end{split}
\end{equation*}
where
$$
	 \sigma=\sigma(M)=|M|^{-\frac12}\sum_{x\in M} e(-Q_M(x))
	 .
$$
This can be extended to an action of the metaplectic group $\Mp$
\cite{Sko 07}, and we shall use~$W(M)$ to denote the $\Mp$-module with
underlying space~$\C[M]$. This action factors through $\SL$ if and
only if $\sigma^4=1$ \cite{Sko 07}; in general, $\sigma$ is an eighth
root of unity. That these formulas define an action of $\SL$ if $\sigma^4=1$
is well-known (cf.~e.g.~\cite{N 76}).

It follows immediately from the defining formulas for the Weil
representations that the pairing
$\{-,-\}:W(M)\otimes W(-M)\rightarrow \C$ given by $\{\pt x,\pt y\}=1$
if $x=y$ and $\{\pt x,\pt y\}=0$
otherwise is invariant under $\Mp$. Here $-M$ denotes the quadratic
module with the same underlying group as $M$ but with the quadratic
form $x\mapsto -Q_M(x)$. The $\Mp$-invariance of this pairing is just
another way to state that the matrix representation of $\Mp$ afforded
by $W(M)$ with respect to the basis $\pt x$ ($x\in M$) is unitary. The
perfect pairing induces a natural isomorphism     of $W(M)^c$ and
$W(-M)$.

A standard example for a quadratic module is the determinant
group~$D_F$ of a symmetric non-degenerate half-integral matrix $F$.
By half-integral we mean that $2F$ has integer entries and even
integers on the diagonal.  The quadratic module has $D_F=\Z^n/2F\Z^n$
as underlying abelian group.  The quadratic form on $D_F$ is the one
induced by the quadratic form $x\mapsto \frac 14 F^{-1}[x]$ on
$\Z$. We shall henceforth write $W(F)$ for $W(D_F)$. Special instances
are the quadratic modules $D_m=(\Ring{2m}, x\mapsto \frac{x^2}{4m})$ for
integers $m\not=0$ and their associated Weil representations
$W(m)$.  The decomposition of $W(m)$ into irreducible $\Mp$-modules
was given in~\cite[Theorem 1.8, p.22]{Sko 85}.  We shall recall this
in section~\ref{sec:proofs}.

The {\it level} $l$ of a quadratic module is the smallest positive
integer $l$ such that $lM=0$ and $lQ_M=0$. The level of $D_F$
coincides with the level of $2F$ as defined in the theory of quadratic
forms, i.e.~it coincides with the smallest integer $l>0$ such that
$lF^{-1}/2$ is half-integral.  If $l$ denotes the level of $M$ and if
$\sigma(M)^4=1$, i.e.~if $W(F)$ can be viewed as $\SL$-module, then the
group $\Gamma(l)$ acts trivially on $W(M)$; if $\sigma(M)^4\not=1$
then $l$ is divisible by $4$ and $\Gamma(l)^*$ acts trivially on
$W(M)$ \cite{Sko 07}.

By $O(M)$ we denote the orthogonal group of a quadratic module $M$,
i.e.~the group of all automorphisms of the underlying abelian group of
$M$ such that $Q_M\circ\alpha=Q_M$. The group $O(m)$, for an integer
$m>0$, is the group of left multiplications of $\Ring{2m}$ by elements
$a$ in $(\Ring{2m})^*$ satisfying $a^2=1$. Its order equals the number of
prime factors of $m$.  The group $O(M)$ acts on $\C[M]$ in the obvious
way. It is easily verified from the defining equations for the action
of $(S,w_S)$ and $(T,w_T)$ on $\C[M]$ that the action of $O(M)$
intertwines with the action of $\Mp$ on $W(M)$. In particular, if~$H$
is a subgroup of $O(M)$ then the subspace $W(M)^H$ of elements in
$W(F)$ which are invariant by $H$ is a $\Mp$-submodule of $W(M)$.

It will turn out that spaces of Jacobi forms of critical weight are
intimately related to the spaces of invariants of Weil
representations. For a Weil representation $W(M)$ we use $\Inv(M)$ for
the subspace of elements in $W(M)$ which are invariant under the action of $\Mp$.
For a mtrix $F$, we also write $\Inv(F)$ for $\Inv(D_F)$.
If $\sigma(M)^4\not=1$ then the action of $\Mp$ on $W(M)$ does not factor
through $\SL$, hence $(1,-1)$ does not act trivially on $W(M)$. It can
be checked (or cf.~\cite{Sko 07}) that $(1,-1)$ acts as nontrivial homotethy,
whence $\Inv(M)=0$.

If $\sigma(M)$ is a fourth root of unity then the question for
$\Inv(M)$ is much more subtle.  Roughly speaking there will be
invariants if $M$ is big enough, and the spaces $\Inv(M)$ fall into
several natural categories according to certain local invariants of
quadratic modules \cite{Sko 07}.

There is one obvious way to construct invariants. Namely, suppose that
$M$ contains an isotropic self-dual subgroup $U$, i.e~a subgroup $U$
such that $Q_M(x)=0$ for all $x$ in $U$ and such that the {\it dual}
$U^*$ of $U$ equals $U$ (where, for a submodule $U$, the dual $U^*$ is,
by definition, the submodule of all $y$ in $M$ satisfying $B_M(x,y)=0$
for all $x$ in $U$). Then the element $I_U:=\sum_{x\in U}\pt x$ is
invariant under $\SL$ (as follows immediately from the defining
equations for the action of $S$ and $T$ and the fact that these
matrices generate $\SL$). Note that here $|M|$ must be a perfect
square (since, for a subgroups $U$ of $M$ one always has an
isomorphism  of abelian groups $M/U\cong U^*$). Also, it is not hard to
check that here $\sigma(M)=1$.

There is one important case where this construction exhausts all
invariants.  We cite some of the results of \cite{Sko 07} which will
clarify this a bit and which will supplement the considerations
in section~\ref{sec:critical-weight}.  For a prime $p$, let $M(p)$ be
the quadratic module with the $p$-part of the abelian group $M$ as
underlying space, equipped with the quadratic form inherited from $M$.

We set
$\sigma_p(M):=\sigma\big(M(p)\big)$.  If $F$ is half-integral and
non-degenerate then
$$
\sigma_p(D_F) =
\begin{cases}
  \e_8\big(p\text{-excess}(2F)\big)&\text{for }p\ge3,\\
  \e_8\big(-\text{oddity}(2F)\big)&\text{for }p=2
\end{cases}.
$$
(For $p$-excess and oddity cf.~\cite[p.~370]{Co-S 88}.) The proof
will be given in~\cite{Sko 07}; we use these formulas in this article
only for the case where $F$ is a scalar matrix, say $F=(n)$. Here these
formulas read
$$
\sigma_p(D_n)=
\begin{cases}
  \sqrt{\left(\frac{-4}q\right)}\left(\frac{-n/q}q\right)&\text{if }p\not=2\\
  e_8(-n/q)\left(\frac{-n/q}{2q}\right)&\text{if }p=2
\end{cases}
,
$$
where $q$ denotes the exact power of $p$ dividing $n$.
These identities
follow directly from the well-known theory of Gauss sums.
\begin{Theorem}
\label{thm:simple-invariants}
Let $M$ be a quadratic module whose order is a perfect square and such
that $\sigma_p(M)=1$ for all primes. Then $\Inv(M)$ is different from
zero. Moreover, $\Inv(M)$ is generated by all $I_U=\sum_{x\in U}\pt x$,
where $U$ runs through the isotropic self-dual subgroups of $M$.
\end{Theorem}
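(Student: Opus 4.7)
The plan is to interpret $\Inv(M)$ Fourier-analytically, prove existence by reducing to the $p$-primary case, and then prove the spanning statement by induction on $|M|$ using descent through an isotropic cyclic subgroup.

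Since $M$ decomposes orthogonally as $M=\bigoplus_p M(p)$, one has $\sigma(M)=\prod_p\sigma_p(M)=1$ under the hypotheses, so that $W(M)$ descends to an $\SL$-module. Writing an element $v=\sum_{x\in M}\lambda(x)\pt x$ of $W(M)$, invariance under $(T,w_T)$ is equivalent to $\lambda$ being supported on the isotropic locus $M_0:=\{x\in M:Q_M(x)=0\}$, and invariance under $(S,w_S)$ is equivalent to the fixed-point equation $\lambda=\widehat\lambda$, where $\widehat\lambda(y):=|M|^{-1/2}\sum_x\lambda(x)\,\e(-B_M(y,x))$ is the normalized Fourier transform. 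Hence $\Inv(M)$ is the space of isotropic-supported, Fourier self-dual functions on $M$, and the indicator functions $\mathbf 1_U$ of distinct isotropic self-dual subgroups $U$ of $M$ are manifestly linearly independent members of this space; that each is actually invariant is already checked in the excerpt. It therefore remains to produce at least one such $U$ and to show that the $\mathbf 1_U$ span.

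For existence of a $U$, the orthogonal decomposition $M=\bigoplus_p M(p)$ shows that a subgroup of $M$ is isotropic self-dual iff each of its $p$-parts is isotropic self-dual in the corresponding $M(p)$, reducing to the $p$-primary case. Invoking the Jordan block classification of $p$-primary finite quadratic modules together with the explicit $p$-excess / oddity formulas for $\sigma_p$ recalled in the excerpt, the conditions $\sigma_p(M)=1$ and $|M(p)|$ a perfect square force $M(p)$ to be hyperbolic, i.e.\ an orthogonal sum of hyperbolic planes, each of which visibly carries an isotropic self-dual subgroup. Assembling these across primes gives the required $U\subset M$ and hence $\Inv(M)\neq 0$. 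For the spanning claim I would proceed by induction on $|M|$: pick a nontrivial isotropic $u\in M$, set $U_0:=\langle u\rangle$, and form the finite quadratic module $N:=U_0^*/U_0$. The form $Q_M$ descends to a non-degenerate quadratic form on $N$ with $\sigma_p(N)=\sigma_p(M)$ and $|N|$ still a perfect square, so the inductive hypothesis applies to $N$. The Fourier-analytic description relates an $\Mp$-invariant $\lambda$ on $W(M)$ to an invariant on $W(N)$ by restricting to $U_0^*$ and averaging along $U_0$-cosets; conversely, every invariant on $W(N)$ lifts back through this correspondence. Since the isotropic self-dual subgroups of $M$ containing $U_0$ biject with those of $N$ via $U\mapsto U/U_0$, the inductive description of $\Inv(N)$ as the span of the $\mathbf 1_V$ lifts to an analogous description for the part of $\Inv(M)$ coming from subgroups containing $U_0$; ranging $u$ over a generating set of isotropic elements and patching completes the induction.

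The main obstacle is the descent step. One must verify carefully that every $\Mp$-invariant of $W(M)$ is genuinely recovered from its coset-reductions to $W(N)$ for varying $u$, with no spurious pieces left over, and that the averaging/lifting operations are compatible with Fourier self-duality. Equivalently, the combinatorial bijection between isotropic self-dual subgroups of $M$ containing $U_0$ and those of $N$ must be refined into a linear-algebraic identification of the corresponding spans; this is where the bulk of the technical work (promised in \cite{Sko 07}) is concentrated. Once this is in place, induction closes and both assertions of the theorem follow.
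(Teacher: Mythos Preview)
The paper itself does not prove this theorem; immediately after stating it, the text calls the proof ``quite tedious'', observes that it is a special case of \cite[Theorem~5.5.7]{N-S-R 07}, and defers a direct argument to~\cite{Sko 07}. So there is no proof in the paper to compare against, and your proposal has to stand on its own.

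Your existence argument contains a concrete error. You assert that the hypotheses ``force $M(p)$ to be hyperbolic, i.e.\ an orthogonal sum of hyperbolic planes''. This is false already for odd $p$ and $M=L_{p^2}=\big(\Z/p^2\Z,\;x\mapsto x^2/p^2\big)$: here $|M|=p^2$ is a square and $\sigma_p(M)=1$ (the paper records that $\sigma\big(L_q(a)\big)=1$ whenever $q$ is a square), yet $M$ is cyclic and certainly not an orthogonal sum of hyperbolic planes. It \emph{does} possess an isotropic self-dual subgroup, namely $p\,\Z/p^2\Z$, but you cannot read this off from a nonexistent hyperbolic splitting. What the hypotheses actually give is that each $M(p)$ is Witt-trivial, and Witt-triviality is \emph{equivalent} to the existence of an isotropic self-dual subgroup; but that equivalence is precisely the nontrivial direction of the Witt classification stated (and again not proved) later in the same section, so you would have to supply that argument rather than sidestep it with a false structural claim.

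For the spanning statement you correctly isolate the main difficulty and then stop short of resolving it. The coset-averaging map you describe is the adjoint of the $\Mp$-embedding $W(U_0^*/U_0)\hookrightarrow W(M)$ from the end of Section~\ref{sec:weil-representations}; it is surjective onto $\Inv(N)$ but has a kernel, so the inductive hypothesis on $N$ controls only a quotient of $\Inv(M)$, not $\Inv(M)$ itself. Your remedy, ``ranging $u$ over a generating set of isotropic elements and patching'', is exactly where the content lies, and nothing in the sketch explains how the pieces are reassembled --- for instance, why an invariant lying in the kernel of every such averaging map must vanish. Without that mechanism the induction does not close, which you yourself acknowledge in the final paragraph.
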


The proof of this theorem is quite tedious. The theorem can be reduced
to a special case of a more general theory concerning invariants of
the Clifford-Weil groups of certain form rings \cite[Theorem
5.5.7]{N-S-R 07}. A more direct proof tailored to the Weil
representations considered here will be given in~\cite{Sko 07}. It is
not hard to show that the assumptions of
Theorem~\ref{thm:simple-invariants} are necessary for $\Inv(M)$ being
generated by invariants of the form $I_U$ (in this article we do not
make use of this).  In general, if $M$ does not satisfy the hypothesis
of Theorem~\ref{thm:simple-invariants}, there might still be
invariants. However, there is one important case, where this is not
the case.

\begin{Theorem}
\label{thm:hauptvermutung-for-small-rank}
Let $M$ be a quadratic module whose order is a power of the
prime $p$. Suppose $\dim_{\F_p} M\otimes \F_p\le 2$. Then
$\Inv(M)\not=0$ if and only if~$|M|$ is a perfect square and
$\sigma(M)=1$
\end{Theorem}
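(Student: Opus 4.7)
\emph{Plan.} The \emph{if} direction follows at once from Theorem~\ref{thm:simple-invariants}: since $M$ is a $p$-group one has $\sigma_q(M)=1$ for every prime $q\ne p$, so the hypotheses of that theorem collapse to the single requirement that $|M|$ be a perfect square and $\sigma(M)=1$.

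For the \emph{only if} direction assume $\Inv(M)\ne 0$. If $\sigma(M)^4\ne 1$, then $(1,-1)\in\Mp$ acts on $W(M)$ as a nontrivial scalar homothety, forcing $\Inv(M)=0$, so we may assume $\sigma(M)^4=1$. I would now invoke the classification of non-degenerate finite quadratic modules over $\Z_p$ of $\F_p$-rank at most two: apart from the trivial module, $M$ is either a cyclic module $\Z/p^k\Z$ equipped with a quadratic form $a x^2/p^k$ (normalised modulo squares), or an orthogonal sum of two such cyclic modules; for $p=2$ one also has the even rank-two Jordan components of hyperbolic type $H_k$ and of the second even type $D_k$. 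In each case $\sigma(M)$ is a product of local Gauss sums computable directly from this Jordan data.

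For cyclic $M$ the explicit decomposition of $W(m)$ into irreducible $\Mp$-submodules given in \cite{Sko 85} (and recalled in section~\ref{sec:proofs}) displays which isotypical components occur, and reading off the multiplicity of the trivial representation shows that the trivial $\Mp$-module occurs in $W(M)$ precisely when $k$ is even and $\sigma(M)=1$, i.e.\ exactly when $|M|$ is a square and $\sigma(M)=1$. For the rank-two cases, Theorem~\ref{thm:simple-invariants} already handles every $M$ possessing an isotropic self-dual subgroup. What remains is to verify $\Inv(M)=0$ in each of the finitely many residual rank-two types in which $|M|$ is a square but $\sigma(M)\ne 1$ (or $|M|$ is not a square). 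Here I would write out the matrices of $(T,w_T)$ and $(S,w_S)$ on the basis $\pt x$, observe that $(T,w_T)$-invariance forces the support of any invariant onto the isotropic set of $M$, and then show by a direct Gauss-sum computation that $(S,w_S)$-invariance admits no nonzero solution.

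The main obstacle is the residual rank-two analysis at $p=2$, where the interplay of odd cyclic summands with the even Jordan blocks $H_k$ and $D_k$ produces several distinct subcases. The Gauss-sum evaluations required there use the conventions fixed in section~\ref{sec:Notations} for the generalised Jacobi--Legendre symbol and are somewhat delicate, but each individual subcase reduces to a finite and mechanical verification.
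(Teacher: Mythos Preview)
Your outline (classify rank-$\le 2$ quadratic $p$-modules, then case-split) matches the paper's starting point, but the \emph{only if} direction is organised quite differently there, and your sketch of it has loose ends. The paper writes $M\cong A\perp B$ with $A,B$ cyclic (say $A=L_{p^\alpha}(a)$, $B=L_{p^\beta}(b)$ for odd $p$), uses $W(M)\cong W(A)\otimes W(B)$ together with $W(B)^c\cong W(-B)$, and reduces the question $\Inv(M)\ne 0$ to whether $W(A)$ and $W(-B)$ share an irreducible constituent. Lemma~\ref{lem:Lqa-properties} (and its $p=2$ analogue stated immediately after) supplies both the list of irreducible pieces $W_1^{\pm}\big(L_q(a)\big)$ and a clean isomorphism criterion (equal $q$, equal sign, product of twists a square mod~$p$); unwinding this gives exactly $\alpha\equiv\beta\bmod 2$ and $-ab$ a square mod~$p$, i.e.\ $|M|$ a square and $\sigma(M)=1$, uniformly in $(\alpha,\beta,a,b)$. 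For the two even rank-two types at $p=2$ the paper either exhibits an explicit $I_U$ or cites the decomposition of the module $N_\alpha$ in~\cite{N-W 76}.

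Your alternative---restrict the support to the isotropic set via $T$-invariance and then solve the $S$-equation by Gauss sums---could in principle be made to work, but the phrase ``finitely many residual rank-two types'' understates the task: the modules are parametrised by two exponents and two unit classes, so you need a uniform argument, not a finite check, and you have not indicated how the Gauss-sum computation closes in general. The tensor/irreducibility route buys exactly that uniformity. Two smaller points: your appeal to Theorem~\ref{thm:simple-invariants} inside the \emph{only if} paragraph is misplaced, since that theorem gives only the \emph{if} direction (the fact that a self-dual isotropic subgroup forces $|M|$ square and $\sigma(M)=1$ is the elementary remark preceding it); and while using Theorem~\ref{thm:simple-invariants} for the \emph{if} direction is valid, the paper's argument is more self-contained there too, producing the invariant directly from the embedding $W\big(L_{p^\alpha}(a)\big)\hookrightarrow W\big(L_{p^\beta}(a)\big)$ furnished by Lemma~\ref{lem:Lqa-properties} rather than invoking a result whose proof is deferred to~\cite{Sko 07}.
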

The proof of this theorem will be given
in~\ref{sec:proofs}.
In general, as soon as $\dim_{\F_p} M\otimes \F_p > 2$ the space of
invariants of $D_F$ is nontrivial~\cite{Sko 07}.

If $M$ and $N$ are quadratic modules, we denote by $M\perp N$ the
orthogonal sum of $M$ and $N$, i.e.~the quadratic module whose
underlying abelian group is the direct sum of the abelian groups $M$
and $N$ and whose quadratic form is given by $x\oplus y\mapsto
Q_M(x)+Q_N(y)$.  It is obvious that every $M$ is the orthogonal
sum of its $p$-parts.  From the product formula for quadratic forms
\cite[p.~371]{Co-S 88}
the sum of the numbers $p\text{-excess}(2F)$, taken over all odd $p$,
minus the oddity of $2F$ plus the signature of $2F$ add up to $0$
modulo $8$.  Hence we obtain\footnote{%
In the literature, this formula is sometimes cited as Milgram's formula.
}
$\sigma(D_F)=\e_8(-\text{signature}(2F))$.

A nice functorial (and almost obvious) property is that the
$\Mp$-modules $W(M\perp N)$ and $W(M)\otimes W(N)$ are isomorphic.  In
particular, $W(M)$ is isomorphic to $\bigotimes W\big(M(p)\big)$,
taken over, say, all $p$ dividing the exponent of~$M$.  If $l$ denotes
the level of $M$ then, for each prime $p$, the level of
$M(p)$ equals the $p$-part $l_p$ of $l$,  and then $W(M)$,
$W\big(M(2)\big)$ and $W\big(M(p)\big)$, for odd $p$, factor through
$\Gamma(l)^*$, $\Gamma(l_2)^*$ and $\Gamma(l_p)$, respectively (here
we view $M(p)$ as $\SL$-module which is possible since
$\sigma\big(M(p)\big)^4=1$ as is obvious from the definition of
$\sigma$ in terms of Gauss sums). Since $\Mp$ is isomorphic to the
product of the groups $\Mp/\Gamma(l_2)^*$ and $\SL/\Gamma(l_p)$ we
deduce that in fact $W(M)$, viewed as $\Mp/\Gamma(l)^*$-module, is
naturally isomorphic to the outer tensor product of the
$\Mp/\Gamma(l_2)^*$-module $W\big(M(2)\big)$ and the
$\SL/\Gamma(l_p)$-modules $W\big(M(p)\big)$. In particular, we have a
natural isomorphism
$$
\Inv(M)
\cong
\bigotimes_{p|l} \Inv\big(M(p)\big)
.
$$
The preceding Theorem thus implies
\begin{Theorem} 
\label{thm:small-rank}
Let $F$ be half-integral. Suppose that
$\dim_{\F_p}D_F\otimes F_p\le 2$
for all primes $p$. Then $\Inv(F)\not=0$ if and only if $\det(2F)$
is a perfect square and~$\sigma_p(D_F)=1$ for all primes $p$.
\end{Theorem}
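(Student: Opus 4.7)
The plan is to reduce Theorem~\ref{thm:small-rank} to Theorem~\ref{thm:hauptvermutung-for-small-rank} via the primary decomposition of $D_F$ into its $p$-parts together with the tensor product factorisation
\[
  \Inv(D_F) \cong \bigotimes_{p\mid l} \Inv(D_F(p))
\]
recorded in the paragraph immediately preceding the statement (here $l$ denotes the level of $D_F$). This isomorphism shows at once that $\Inv(F)\neq 0$ if and only if $\Inv\big(D_F(p)\big)\neq 0$ for every prime $p$ dividing $l$.

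First I would check that the small-rank hypothesis transfers to each $p$-part. Since $D_F=\bigoplus_q D_F(q)$ as an abelian group and $D_F(q)\otimes \F_p = 0$ whenever $q\neq p$, we have $D_F\otimes \F_p \cong D_F(p)\otimes \F_p$, so the assumption $\dim_{\F_p} D_F\otimes \F_p\le 2$ gives $\dim_{\F_p} D_F(p)\otimes \F_p\le 2$. Theorem~\ref{thm:hauptvermutung-for-small-rank} therefore applies to each $D_F(p)$ and yields $\Inv\big(D_F(p)\big)\neq 0$ if and only if $|D_F(p)|$ is a perfect square and $\sigma\big(D_F(p)\big)=1$; by definition the latter equality reads $\sigma_p(D_F)=1$.

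To reassemble the perfect-square condition, recall that $|D_F|=|\det(2F)|$ and that the orders $|D_F(p)|$ are pairwise coprime prime powers. Since the product of pairwise coprime positive integers is a perfect square if and only if each factor is, $\det(2F)$ is a perfect square exactly when each $|D_F(p)|$ is. Combining this with the previous paragraph, and noting that for primes $p\nmid l$ the $p$-part $D_F(p)$ is trivial so that $|D_F(p)|=1$ and $\sigma_p(D_F)=1$ hold automatically, the quantification over all primes in the statement matches the quantification over $p\mid l$ used in the tensor decomposition, and the biconditional of the theorem follows.

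The argument has no real obstacle: it is entirely formal once the tensor factorisation of $\Inv$ and Theorem~\ref{thm:hauptvermutung-for-small-rank} are granted. The only mild care is in verifying the two elementary facts that $\dim_{\F_p} D_F\otimes \F_p = \dim_{\F_p} D_F(p)\otimes \F_p$, and that coprimality of the $|D_F(p)|$ reduces the squareness of $\det(2F)$ to that of its $p$-local pieces.
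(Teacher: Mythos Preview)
Your proposal is correct and follows exactly the route the paper takes: the paper derives Theorem~\ref{thm:small-rank} in one line (``The preceding Theorem thus implies'') from the tensor factorisation $\Inv(M)\cong\bigotimes_{p\mid l}\Inv(M(p))$ together with Theorem~\ref{thm:hauptvermutung-for-small-rank}, and you have simply written out the routine verifications (transfer of the rank bound to $p$-parts, and reassembly of the square and $\sigma_p$ conditions) that the paper leaves implicit.
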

\begin{Remark}
If $F$ is positive-definite, say, of size $n$, then $\sigma(D_F)=\e_8(-n)$.
Thus, if $\dim_{\F_p}D_F\otimes F_p\le 2$ for all primes $p$ then,
by the theorem,
$\Inv(F)=0$ unless $\det(D_F)$ is a perfect square
and $n$ is divisible by $8$.
\end{Remark}
The meaning of the numbers $\sigma_p(M)$ becomes clearer if one
introduces the Witt group of finite quadratic modules
(see~\cite[Ch.~5, \S1]{Sch 84},
or~\cite{Sko 07} for a discussion more adapted to the current situation).
This group generalizes the well-known Witt group of quadratic
spaces, say, over the field $\F_p$, which can be viewed as special
quadratic modules. We call two quadratic modules $M$ and $N$ {\it Witt
equivalent} if they contain isotropic subgroups $U$ and $V$,
respectively, such that $U^*/U$ and $V^*/V$ are isomorphic as
quadratic modules. Here, for an isotropic subgroup $U$ of a quadratic
module $M$, we use $U^*/U$ for the quadratic module with underlying group
$U^*/U$ (as quotient of abelian groups) and quadratic form $x+U\mapsto
Q_M(x)$ (note that $Q_M(x)$, for $x\in U^*$ does depend on $x$ only
modulo $U$). Note that a quadratic module $M$ is Witt equivalent to
the trivial module $0$ if and only if it contains an isotropic
self-dual subgroup.  Is is not hard to see that Witt equivalence defines
indeed an equivalence relation, and that the orthogonal sum $\perp$
induces the structure of an abelian group on the set of Witt
equivalence classes.  One can prove then~\cite{Sko 07}
(but this can also be read of from~\cite[Ch.~5,, \S1,2]{Sch 84}):
\begin{Theorem}
Two quadratic modules $M$ and $N$ are Witt equivalent if and only if
their orders are equal up to a rational square and
$\sigma_p(M)=\sigma_p(N)$ for all primes $p$.
\end{Theorem}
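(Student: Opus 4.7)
The plan is to reduce both directions of the equivalence to a statement about Witt-triviality of an auxiliary module, after which one direction is a short Gauss-sum computation and the other is an appeal to Theorem~\ref{thm:simple-invariants}. The first step is a Witt-group reformulation: for any $M$ the diagonal $\Delta = \{(x,x) : x \in M\}$ is an isotropic self-dual subgroup of $M \perp (-M)$ (isotropic because $Q_M(x) + Q_{-M}(x) = 0$, self-dual because $B_{M \perp (-M)}((y,z),(x,x)) = B_M(y - z, x)$ vanishes for all $x \in M$ only when $y = z$, by non-degeneracy of $B_M$). So $-M$ is inverse to $M$ in the Witt group, and $M$ and $N$ are Witt equivalent if and only if $K := M \perp (-N)$ contains an isotropic self-dual subgroup.

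Next I would verify that the two invariants in the statement actually descend to the Witt group. For an isotropic subgroup $U$ of $M$, non-degeneracy of $B_M$ yields $|U^*| = |M|/|U|$ and hence $|M| = |U|^2 \cdot |U^*/U|$, so the order modulo rational squares is a Witt invariant. For $\sigma_p$ I would split the Gauss sum defining $\sigma(M)$ over cosets of $U$. Using $Q_M(x_c + u) = Q_M(x_c) + B_M(x_c, u)$ for $u \in U$, the inner sum over $U$ equals $|U|$ exactly when $x_c \in U^*$ and is $0$ otherwise, so
\begin{equation*}
  \sum_{x \in M} e\bigl(-Q_M(x)\bigr)
  = |U| \sum_{\bar c \in U^*/U} e\bigl(-Q_{U^*/U}(\bar c)\bigr).
\end{equation*}
Combining with $|M| = |U|\,|U^*|$ and $|U^*/U| = |U^*|/|U|$ gives $\sigma(M) = \sigma(U^*/U)$, and the same argument applied inside each $p$-primary component (an isotropic subgroup decomposes as $U = \bigoplus_p U(p)$ along the $p$-primary splitting of $M$) produces $\sigma_p(M) = \sigma_p(U^*/U)$.

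For the converse I would apply these invariance statements to $K = M \perp (-N)$: its order $|K| = |M||N|$ is a perfect integer square because $|M|/|N|$ is a rational square (so $|M||N| = (|M|/|N|)\cdot |N|^2$ is a rational square that is also an integer), and $\sigma_p(K) = \sigma_p(M)\,\sigma_p(-N) = \sigma_p(M)\,\overline{\sigma_p(N)} = 1$, since $\sigma_p$ has modulus one and $\sigma_p(M) = \sigma_p(N)$. Theorem~\ref{thm:simple-invariants} then supplies an isotropic self-dual subgroup of $K$, so $M$ and $N$ are Witt equivalent. The only real work carried out inside the present section is the coset-splitting of the Gauss sum; the deep input is Theorem~\ref{thm:simple-invariants}, whose proof is postponed to~\cite{Sko 07}, and this is where the main difficulty of the argument is concentrated.
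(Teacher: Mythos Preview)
Your forward direction matches the paper exactly: the paper's only hint is precisely the coset-splitting of the Gauss sum over an isotropic $U$, which you carry out in detail, together with the order computation $|M|=|U|^2\,|U^*/U|$.

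For the converse, however, you take a route the paper does not. The paper gives no argument at all for this direction; it simply says ``The converse statement is not needed in this article'' and refers the reader to~\cite{Sko 07} and~\cite{Sch 84}. Your idea of reducing to $K=M\perp(-N)$ and then invoking Theorem~\ref{thm:simple-invariants} is a genuine internal shortcut: since that theorem asserts both $\Inv(K)\neq 0$ and that $\Inv(K)$ is spanned by the $I_U$ with $U$ isotropic self-dual, at least one such $U$ must exist, which is exactly Witt-triviality of $K$. This keeps the whole statement inside the paper's own framework of stated results, whereas the paper treats the two theorems as logically independent inputs from~\cite{Sko 07}. The trade-off is that you are leaning on the harder of the two unproved results (Theorem~\ref{thm:simple-invariants}), so the argument is only as self-contained as that theorem; but as a deduction within the paper it is clean and correct, and you flag this dependence explicitly.
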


That Witt equivalent modules have the same order in $\Q^*/{\Q^*}^2$ and
the same $\sigma_p$-invariants is obvious (for proving equality of the
sigma invariants, say, for modules of prime power order, split, for an
isotropic submodule of $M$, the sum in the definition of $\sigma(M)$
into a double sum over a complete set of representatives $y$ for $M/U$
and a sum over $x$ in $U$). The converse statement is not needed in
this article and for its proof we refer the reader to~$\cite{Sko 07}$
or~\cite{Sch 84}.

The connection between Witt equivalence and Weil representations is
given by the following functorial property of quadratic modules. If
$U$ is an isotropic subgroup of $M$ then $U^*/U$ is again
non-degenerate, and hence we can consider its associated Weil
representation. The map $\pt {x+U} \mapsto \sum_{y\in x+U}\pt y$
defines an $\Mp$-equivariant embedding of $W(U^*/U)$ into $W(M)$.
Again, this is an immediate consequence of the defining equations for the
action of~$(T,1)$ and $(S,w_S)$.

%%%%%%%%%%%%%%%%%%%%%%%%%%%%%%%%%%%%%%%%%%%%%%%%%%%%%%%%%%%%%%%%%%%%%%%%%%%%%
\section{Jacobi Forms and Vector Valued Modular Forms}
\label{sec:connection-to-vector-valued-modular-forms}
If $\Gamma$
denotes a subgroup of $\SL$ we use $\JG \Gamma$ for the {\it Jacobi group}
$\JG \Gamma=\Gamma\ltimes (\Z^n\times\Z^n)$.  Thus, $\JG \Gamma$
consists of all pairs $(A,(\lambda,\mu))$ with $A\in\Gamma$, and
$\lambda,\mu\in\Z^n$, equipped with the composition law
$$
(A,(\lambda,\mu))\cdot (A',(\lambda',\mu'))
=
(AA',(\lambda,\mu)A'+(\lambda',\mu'))
.
$$
Here and in the following elements of $\Z^n$ will be considered as
column vectors.  Moreover, we use $(\lambda,\mu)A$ for $(\lambda a +
\mu c, \lambda b + \mu d)$ if $A=[a,b;c,d]$.

We identify $\Gamma$ with the subgroup $\Gamma\ltimes(0\times 0)$, and
for $\lambda,\mu\in\Z^n$ we use $[\lambda,\mu]$ for the element
$(1,(\lambda,\mu))$ of $\JG \Gamma$. Then any element $g\in\JG \Gamma$
can be written uniquely as $g=A[\lambda,\mu]$ with suitable
$A\in\Gamma$ and $\lambda,\mu$ in $\Z^n$.

Let $F$ be a symmetric, half-integral  $n\times n$~matrix.
For every integer $k$, we have an action of the Jacobi group $\JG \Gamma$
on functions
$\phi$ defined on $\HP\times\C^n$ which is given by the formulas:
\begin{gather*}
	 \phi|_{k,F}A(\tau,z)
	 =
	 \phi\left(A\tau,\frac z{c\tau+d}\right)\,
	 (c\tau+d)^{-k}\,
	 \e\left(\frac {-c F[z]}{c\tau+d}\right),\\
	 \phi|_{k,F}[\lambda,\mu](\tau,z)
	 =
	 \phi(\tau,z+\lambda\tau +\mu)\,
	 \e\left(\tau F[\lambda] + 2z^tF\lambda\right)
\end{gather*}
where $A=[a,b;c,d]\in \Gamma$ and $\lambda,\mu\in\Z^n$.

For the following its is convenient to admit also half-integral
$k$. To this end we consider the group $\JG \Gamma=\Gamma\ltimes
(\Z^n\times\Z^n)$ for subgroups $\Gamma$ of $\Mp$, which is defined as
in the case of a subgroup of $\SL$  with respect to the action
$((A,w),x)\mapsto xA$ of $\Mp$ on $\Z^2$. For half-integral
$k$, we then define $\phi|_{k,F}(A,w)$ as in the formulas above but
with the factor $(c\tau+d)^{-k}$ replaced by $w(\tau)^{-2k}$.  In this
way the symbol $|_{k,F}$ defines a right action of $\JG \Gamma$ on
functions defined on $\HP\times\C$. If $k$ is integral this action
factors through the action of $\JG {\down\Gamma}$ defined above, where
$\down\Gamma$ denotes the projection of $\Gamma$ onto its first coordinates..

\begin{Definition}
Let $F$ be a symmetric, half-integral positive definite $n\times
n$~matrix, let $k$ be a half-integral integer and, for a subgroup
$\Gamma$ of finite index in $\Mp$, let $V$ be a complex finite
dimensional $\Gamma$-module.  A {\sl Jacobi form of weight~$k$ and
index $F$ with typus $(\Gamma,V)$} is a holomorphic
function~$\phi:\HP\times \C^n\rightarrow V$ such that the following
two conditions hold true:
\begin{enumerate}
\item[(i)]
For all $\JG\Gamma$ and all $\tau\in\HP$, $z\in\C^n$, one
has $\left(\phi|_{k,F}g\right)(\tau,z) = g\left(\phi(\tau,z)\right)$,
where we view $V$ as a $\JG\Gamma$-module by letting act $\Z^n\times\Z^n$
trivially on~$V$.
\item[(ii)]
For all $\alpha\in\Mp$ the function $\phi|_{k,F}\alpha$
possesses a Fourier expansion of the form
$$
\phi|_{k,F}\alpha = \sum_{\begin{subarray}{c}l\in\Q,\; r\in\Z^{n}\\
	 4l-F^{-1}[r]\ge 0\end{subarray}} c(l,r)\,q^l\,\e(z^tr).
$$
\end{enumerate}
\end{Definition}

We shall use $J_{k,F}(\Gamma,V)$ for the complex vector space of
Jacobi forms of weight~$k$ and index $F$ of typus $(\Gamma,V)$\footnote{%
It is sometimes useful to consider more general types of Jacobi
forms. In particular, the definition does not include the case of
Jacobi forms of half-integral scalar index, the basic example for such
type being $\vartheta(\tau,z)$ (cf.~section~\ref{sec:Notations}). However, $\vartheta(\tau,2z)$ defines
an element of $J_{\frac12,2}(\Mp,\C(\varepsilon^3))$ and thus our omission merely
amounts to ignoring a certain additional invariance with respect to
the bigger lattice $\frac12\Z\times\frac12\Z$.
}.
If, for a Jacobi
form $\phi$, in condition~(i) of the definition, for all $\alpha$, the
stronger inequality $4l-F^{-1}[r] > 0$ holds true then we call $\phi$
a cusp form. The subspace of cusp forms in $J_{k,F}(\Gamma,V)$ will be
denoted by $J_{k,F}^{\text cusp}(\Gamma,V)$.

If $V$ is a $\Gamma$-module for a subgroup $\Gamma$ of $\SL$,  then we
may turn $V$ into a $\up\Gamma$-module $\up V$ by setting
$(A,w)v:=Av$, and we simply write $J_{k,F}(\Gamma,V)$ for the space
$J_{k,F}(\up \Gamma,\up V)$.  Note that in this case
$J_{k,F}(\Gamma,V)=0$ unless $k$ is integral (since then the element
$(1,-1)$ of $\up\Gamma$ acts trivially on $\up V$ and it acts as
multiplication by $(-1)^{2k}$ on $J_{k,F}(\Gamma,V)$ by the very
definition of the operator~$|_{k,F}$). Thus, if $V$ is
a $\Gamma$-module for a subgroup $\Gamma$ of $\SL$, we may
confine our considerations to integral $k$ and then the first
condition in the definition of $J_{k,F}(\Gamma,V)$ is equivalent to
the statement that $\phi|_{k,F}A(\tau,z)=A\big(\phi(\tau,z)\big)$ for
all $A\in\Gamma$.  Similarly, if $V$ is an irreducible $\Gamma$-module
for a subgroup $\Gamma$ of $\Mp$ then, for integral $k$, we have
$J_{k,F}(\Gamma,V)=0$ unless the action of $\Gamma$ on $V$ factors
through an action of $\down \Gamma$ (since, if the action of $\Gamma$
does not factor then $\Gamma$ contains $(1,-1)$, which must act
nontrivially on $V$, and, since $V$ is irreducible, the central
element $(1,-1)$ acts then as multiplication by $-1$).  Finally,
we may in principle always confine to irreducible $V$.
Indeed, if $V=\bigoplus V_j$
is the decomposition of the $\Gamma$-module $V$ into irreducible
parts, then $J_{k,F}(\Gamma,V)\cong\bigoplus J_{k,F}(\Gamma,V_j)$.

If $\C(1)$ denotes the trivial $\Gamma$-module for a subgroup
$\Gamma$ of $\SL$ or $\Mp$ we simply write~$J_{k,F}(\Gamma)$ for
$J_{k,F}\big(\Gamma,\C(1)\big)$, and we write~$J_{k,F}$
for~$J_{k,F}(\Gamma)$ if $\Gamma$ is the full
modular group.  Note that for a positive integer~$m$ and integral $k$,
the space $J_{k,m}(\Gamma)$ coincides with the usual space of Jacobi
forms of weight~$k$ and index~$m$ on $\Gamma$ as defined in~\cite{E-Z 85}.

It is an almost trivial but useful observation that, in the theory of
vector valued Jacobi or modular forms, one can always restrict
to forms on the full group~$\Mp$. In fact, one has
\begin{Lemma}
Let $\Gamma$ be a subgroup of $\Mp$ and let $V$ be a $\Gamma$-module.
Then there is a natural isomorphism
$$
J_{k,F}(\Gamma,V)\xrightarrow{\sim}J_{k,F}(\Mp,\Ind_\Gamma^{\Mp}V)
.$$
(Here $\Ind_\Gamma^{\Mp}V = \C[\Mp]\otimes_{\C[\Gamma]} V$ denotes
the $\Mp$-module induced by $V$.)
\end{Lemma}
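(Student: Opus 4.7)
The plan is a direct Frobenius reciprocity argument adapted to the $|_{k,F}$-action. Model $\Ind_\Gamma^{\Mp} V$ concretely as the space of $\Gamma$-equivariant functions $\{f:\Mp\to V \mid f(\gamma h)=\gamma f(h) \text{ for all } \gamma\in\Gamma,\ h\in\Mp\}$ equipped with the left $\Mp$-action $(h\cdot f)(g) := f(gh)$; since $\Gamma$ has finite index in $\Mp$, this is canonically isomorphic to $\C[\Mp]\otimes_{\C[\Gamma]}V$ via $h\otimes v\mapsto \bigl(g\mapsto (gh)v\text{ if }gh\in\Gamma,\ 0\text{ otherwise}\bigr)$.

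Given $\phi \in J_{k,F}(\Gamma, V)$, define $\Phi:\HP\times\C^n \to \Ind_\Gamma^{\Mp} V$ by
\[
  \Phi(\tau,z)(g) := \bigl(\phi|_{k,F}\, g\bigr)(\tau,z) \qquad (g\in\Mp).
\]
The verification splits into three routine checks. \emph{(a)} For each $(\tau,z)$, the function $g\mapsto \Phi(\tau,z)(g)$ lies in $\Ind V$ because condition~(i) for $\phi$ yields $(\phi|_{k,F}\gamma g)(\tau,z)=((\gamma\phi)|_{k,F}g)(\tau,z)=\gamma\,(\phi|_{k,F}g)(\tau,z)$ for $\gamma\in\Gamma$. \emph{(b)} $\Phi$ transforms correctly under $\JG{\Mp}$: since the slash operator acts through scalars and therefore commutes with evaluation at a fixed $g\in\Mp$, one computes $(\Phi|_{k,F}h)(\tau,z)(g)=(\phi|_{k,F}(gh))(\tau,z)=\Phi(\tau,z)(gh)=(h\cdot\Phi(\tau,z))(g)$ for every $h\in\Mp$, while the translation-invariance $\Phi|_{k,F}[\lambda,\mu]=\Phi$ follows from the Jacobi-group identity $g[\lambda,\mu]=[(\lambda,\mu)A^{-1}]g$ (where $A$ is the image of $g$ in $\SL$), which moves the translation back into $\Z^n\times\Z^n\subset\JG{\Gamma}$ so that the translation-invariance of $\phi$ applies. \emph{(c)} The Fourier condition for $\Phi$ reduces to condition~(ii) for $\phi|_{k,F}g$ as $g$ ranges over a finite set of coset representatives for $\Gamma\backslash\Mp$.

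For the inverse, send $\Phi\in J_{k,F}(\Mp,\Ind V)$ to the $V$-valued $\phi(\tau,z):=\Phi(\tau,z)(1)$; evaluating $\Phi|_{k,F}\gamma=\gamma\cdot\Phi$ at $1\in\Mp$ recovers condition~(i) for $\phi$ with respect to $\Gamma$, and the Fourier condition follows by pointwise evaluation. The two assignments are mutually inverse by the identity $(\phi|_{k,F}g)(\tau,z)=\Phi(\tau,z)(g)$ established in~(b), which reconstructs $\Phi$ from its value at the identity. The only nontrivial step is the translation-invariance in~(b); once the Jacobi-group identity $g[\lambda,\mu]=[(\lambda,\mu)A^{-1}]g$ is in hand, everything else is formal bookkeeping with the cocycle defining $|_{k,F}$, and naturality in $V$ is automatic because both assignments are given by universal formulas.
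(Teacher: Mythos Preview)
Your proof is correct and takes essentially the same approach as the paper: both construct the standard Frobenius-reciprocity map, the paper writing it in the tensor model as $\phi\mapsto\sum_{g\Gamma\in\Mp/\Gamma} g\otimes\phi|_{k,F}g^{-1}$ and you in the equivalent function model as $\Phi(\tau,z)(g)=(\phi|_{k,F}g)(\tau,z)$. The paper simply asserts that the verifications are easy; you have carried them out explicitly, including the translation-invariance step via $g[\lambda,\mu]=[(\lambda,\mu)A^{-1}]g$, which is exactly what the paper's ``easily verified'' hides.
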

\begin{proof}
Consider the natural map
$
\phi \mapsto \sum_{g\Gamma\in\Mp/\Gamma} g \otimes \phi|_{k,F}{g^{-1}}
$.
It is easily verified that
the summands do not depend on the choice of the representatives $g$ and that this map
defines an isomorphism as claimed in the theorem.
\end{proof}

Jacobi forms may be viewed as vector valued elliptic modular
forms. For stating this more precisely we denote by $M_k$ the space of
holomorphic functions $h$ on $\HP$ such that $h$ is a modular form of
weight $k$ on some subgroup $\Gamma$ of $\Mp$ (where we assume that
$\Gamma$ is contained in $\Gamma_0(4)$ if $k$ is not integral). Note
that $M_k$ is a $\Mp$-module with respect to the action $|_k$. By
$M_k^{\text cusp}$ we denote the submodule of cusp forms. Using the
$\Mp$-module $W(F)$ introduced in
section~\ref{sec:weil-representations} we then have

\begin{Theorem}
\label{thm:first-isomorphism}
For a subgroup $\Gamma$ of $\Mp$, let $V$ be a finite dimensional
$\Gamma$-module. Assume that the image of $\Gamma$ under the
associated representation is finite.  Then, for any half-integral $k$
and any half-integral $F$  there is a natural isomorphism
$$
J_{k,F}(\Gamma,V)
\xrightarrow{\sim}
\left(M_{k-\frac n2}\otimes W(F)^*\right)\otimes_{\C[\Mp]}\Ind_\Gamma^{\Mp}V
.
$$
\end{Theorem}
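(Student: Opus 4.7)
The plan is to argue via the classical theta decomposition of Jacobi forms. First I would invoke the Lemma immediately preceding the theorem to reduce the statement to the case $\Gamma = \Mp$, replacing $V$ by $W := \Ind_\Gamma^{\Mp}V$; it then suffices to construct a natural isomorphism
$$
J_{k,F}(\Mp, W) \xrightarrow{\sim} \left(M_{k-\frac n2} \otimes W(F)^*\right) \otimes_{\C[\Mp]} W
$$
for any finite-dimensional $\Mp$-module $W$ with finite image.

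The key input is that every $\phi \in J_{k,F}(\Mp, W)$ admits a unique theta decomposition
$$
\phi(\tau, z) = \sum_{x \in D_F} h_x(\tau)\, \vartheta_{F,x}(\tau, z),
$$
where the $h_x$ are $W$-valued holomorphic functions on $\HP$. I would establish this by developing $\phi$ in a Fourier series in $z$, using the lattice invariance $\phi|_{k,F}[\lambda,\mu] = \phi$ together with the positivity of $F$ to complete the square and regroup terms by cosets modulo $2F\Z^n$. Next I would verify the classical fact that the theta series realise $W(F)$, i.e.\ $\vartheta_{F,x}|_{n/2,F}\, g = \sum_{y \in D_F} \rho(g)_{yx}\, \vartheta_{F,y}$ for $g \in \Mp$, where $\rho(g)$ denotes the matrix of $g$ acting on $W(F)$ in the basis $\{\pt x\}_{x\in D_F}$; since $\Mp$ is generated by $(T, w_T)$ and $(S, w_S)$, this reduces to a direct check at these two generators using the standard theta inversion. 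Combining these ingredients with the cocycle property of $|_{k,F}$ and the uniqueness of the theta decomposition, the Jacobi transformation $\phi|_{k,F}\, g = g\cdot \phi$ becomes the system
$$
\sum_{x\in D_F} \rho(g)_{yx}\, \bigl(h_x|_{k-\frac n2}\, g\bigr) = g\cdot h_y \qquad (y\in D_F,\ g\in\Mp).
$$

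The final step is to identify this system exactly with the balancing relation in $\left(M_{k-\frac n2}\otimes W(F)^*\right)\otimes_{\C[\Mp]} W$, so that the assignment $\phi \mapsto \sum_{x\in D_F} h_x \otimes \pt{x}^*$ (after expanding the $W$-valued $h_x$ via a basis of $W$ to land in $M_{k-\frac n2}\otimes W(F)^* \otimes W$) realises the desired natural isomorphism. Injectivity is automatic from uniqueness of the theta expansion; surjectivity follows by reversing the construction, since any vector $(h_x)$ satisfying the displayed compatibility assembles into a Jacobi form. The principal obstacles I foresee are keeping the bookkeeping of left/right actions and dualities straight so that the diagonal right action of $\Mp$ on $M_{k-\frac n2}\otimes W(F)^*$ balances correctly against the left action on $W$, and proving that the Fourier/growth condition on $\phi$ at each cusp is equivalent to $h_x \in M_{k-\frac n2}$; the latter rests on the explicit Fourier expansion of $\vartheta_{F,x}|_{n/2,F}\, \alpha$ at every $\alpha \in \Mp$, whose exponents are non-negative, so that the inequality $4l - F^{-1}[r]\ge 0$ on $\phi$ transfers cleanly into holomorphy of $h_x$ at cusps.
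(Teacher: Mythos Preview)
Your proposal is correct and follows essentially the same route as the paper: reduction to $\Gamma=\Mp$ via the preceding Lemma, theta decomposition, identification of the span of the $\vartheta_{F,x}$ with $W(F)^*$, and transfer of the cusp conditions to conclude that the coefficient functions lie in $M_{k-\frac n2}$. The only organizational difference is that the paper inserts an intermediate step, introducing the $\Mp$-module ${\cal J}_{k,F}$ of \emph{scalar} Jacobi forms on varying finite-index subgroups and proving ${\cal J}_{k,F}\cong M_{k-\frac n2}\otimes W(F)^*$ before tensoring with $V$ over $\C[\Mp]$, whereas you carry the $W$-values through the theta expansion directly; the two arguments unwind to the same map.
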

\begin{proof}
Using the isomorphism of the preceding lemma we can assume that $V$ is a $\Mp$-module.

Denote by ${\cal J}_{k,F}$ the space of all functions $\phi$, where $\phi$ is in
$J_{k,F}(\Gamma)$ for some subgroup $\Gamma$. The space ${\cal J}_{k,F}$ is clearly
a $\Mp$-module under the action~$|_{k,F}$. The bilinear map $(\phi,v)\mapsto \phi\cdot v$
induces an isomorphism
$$
{\cal J}_{k,F}\otimes_{\C[\Mp]} V \xrightarrow{\sim} J_{k,F}(\Mp,V)
.
$$
For verifying the surjectivity of this map we need that some
subgroup $\Gamma$ of finite index in $\Mp$ acts trivially on $V$ (or
equivalently that the image of $\Mp$ under the representation afforded
by the $\Mp$-module $V$ is finite).  Namely, if we write a $\phi\in
J_{k,F}(\Gamma,V)$ with respect to some basis $e_j$ of $V$ as
$\phi=\sum_j \phi_j\cdot e_j$, then, by our assumption, the $\phi_j$
are scalar valued Jacobi forms on $J_{k,F}\big(\Gamma\big)$.

Assume now that $\psi$ is an element of $J_{k,F}\big(\Gamma)$. Then it
is well-know (and follows easily from the invariance of $\psi$ under
$\Z^n\times\Z^n$) that we can expand $\psi$ in the form
$$
\psi(\tau,z)=\sum_{x\in\Z^n/2F\Z^n} h_x(\tau)\,\vartheta_{F,x}(\tau,z)
$$
with functions $h_x$ which are holomorphic in $\HP$, and where the
$\vartheta_{F,x}$ denote the theta series defined in
section~\ref{sec:Notations}. The space of functions $\Theta(F)$
spanned by the $\vartheta_{F,x}$ is a $\Mp$-(right)module with respect
to the action $|_{\frac n2,F}$ and is acted on trivially by
$\Gamma(4N)^*$ for some integer $N$ (see e.g.~\cite{Kl 46}).
Accordingly, the $h_x$ are then invariant under
$\Gamma(4N)^*\cap\Gamma$ under the action $|_{k-\frac n2}$ (one needs
here also the linear independence of the $\vartheta_{F,x}(\tau,z)$, where
$x$ runs through a set of representatives for $\Z^n/2F\Z^n$). In fact,
one easily deduces from the regularity condition for Jacobi forms at
the cusps (i.e.~from condition (ii) of the definition) that the $h_x$
are elements of $M_{k-\frac n2}$. Thus, the bilinear map
$(h,\theta)\mapsto h\cdot\theta$ induces an isomorphism of $\Mp$-right
modules
$$
M_{k-\frac n2}\otimes\Theta(F) \xrightarrow{\sim} {\cal J}_{k,F} .
$$

Finally we note that $\Theta(F)$ is isomorphic as $\Mp$-module to $W(F)^*$ via
the map $W(F)^*\ni\lambda \mapsto \sum_{e\in D_F}\lambda(e)\,\vartheta_{F,e}$
(cf.~\cite{Kl 46}). From this the theorem is now immediate.
\end{proof}

\begin{Remark}
A review of the foregoing proof shows that in the statement of the theorem
we may replace $M_{k-\frac12}$ by $M^\text{cong}_{k-\frac n2}$, the subspace of all
$h$ in $M_{k-\frac n2}$ which are modular forms on some congrence subgroup of $\Mp$,
provided $\Gamma(4N)^*$, for some $N$, acts trivially on $V$.
This remark will become important in the next section.
\end{Remark}

We use the isomorphism of Theorem~\ref{thm:first-isomorphism} to
define the subspace $J_{k,F}^{\text Eis.}(\Gamma,V)$
of {\it Jacobi-Eisenstein series}
as the preimage
of the subspace which is obtained by replacing $M_{k-\frac12}$  on the
right hand side of this isomorphism by the space of Eisenstein series
$M_{k-\frac12}^\text{Eis.}$ in $M_{k-\frac12}$ (i.e.~by the orthogonal
complement of the cusp forms in $M_{k-\frac12}$).

Theorem~\ref{thm:first-isomorphism} implies that there are no Jacobi
forms of index $F$ and weight $k < \frac n2$ (since then $M_{k-\frac
n2}=0$). For $k=\frac n2$ we have $M_k=\C$, and hence the theorem
implies
\begin{Corollary}
There exists a natural isomorphism
$$
J_{\frac n2,F}(\Gamma,V)\cong\Hom_{\Mp}(W(F),\Ind_{\Gamma}^{\Mp} V)
.
$$
\end{Corollary}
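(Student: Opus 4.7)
The plan is to obtain the corollary as a direct specialization of Theorem~\ref{thm:first-isomorphism} at $k=\frac n2$, followed by a standard coinvariants-to-invariants identification for finite group actions.

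First, I would plug $k=\frac n2$ into the isomorphism of Theorem~\ref{thm:first-isomorphism}. The only nontrivial input here is that the first tensor factor $M_{k-\frac n2}=M_0$ reduces to the one-dimensional trivial $\Mp$-module $\C$. This is because any $h\in M_0$ is a holomorphic function on $\HP$ invariant under the action $|_0$ of some finite-index subgroup of $\Mp$, i.e.\ a holomorphic function on a finite cover of the modular curve, which by the regularity at the cusps extends to a bounded holomorphic function on the compact completion and is therefore constant. Inserting $M_0\cong\C$ simplifies the right-hand side to
$$
J_{\frac n2,F}(\Gamma,V)\;\cong\;W(F)^*\otimes_{\C[\Mp]}\Ind_\Gamma^{\Mp}V.
$$

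The second and decisive step is to exhibit a natural isomorphism
$$
W(F)^*\otimes_{\C[\Mp]}B\;\xrightarrow{\sim}\;\Hom_{\Mp}\bigl(W(F),B\bigr)
$$
for $B=\Ind_\Gamma^{\Mp}V$. View $W(F)^*$ as a left $\Mp$-module via inversion, so that the left-hand side is the space of coinvariants $(W(F)^*\otimes B)_{\Mp}$ for the diagonal action, while the right-hand side is naturally the space of invariants $(W(F)^*\otimes B)^{\Mp}$. The natural map from coinvariants to invariants is the averaging operator $\frac1{|G|}\sum_{g\in G}g$, and it is an isomorphism as soon as the $\Mp$-action on $W(F)^*\otimes B$ factors through a finite quotient $G$. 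This finiteness holds in our situation: $W(F)$ factors through $\Mp/\Gamma(l)^*$ where $l$ is the level of $D_F$ (cf.\ section~\ref{sec:weil-representations}); the image of $\Gamma$ in $\sym{GL}(V)$ is finite by hypothesis, and since $\Gamma$ has finite index in $\Mp$ the induced representation $\Ind_\Gamma^{\Mp}V$ likewise has finite image in its general linear group. Combining the two isomorphisms yields the desired natural identification.

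The only genuine subtlety is bookkeeping: one must verify that the averaging isomorphism is compatible with the composition of the two maps (i.e.\ that the resulting identification really is natural in $\Gamma$ and $V$), which amounts to unwinding the Frobenius reciprocity $\Hom_{\Mp}(W(F),\Ind_\Gamma^{\Mp}V)\cong\Hom_{\Gamma}(W(F),V)$ in parallel with $W(F)^*\otimes_{\C[\Mp]}\Ind_\Gamma^{\Mp}V\cong W(F)^*\otimes_{\C[\Gamma]}V$. No further input from the theory of Jacobi forms is needed.
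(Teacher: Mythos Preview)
Your proof is correct and follows the same route as the paper: specialize Theorem~\ref{thm:first-isomorphism} to $k=\tfrac n2$, use $M_0=\C$, and identify $W(F)^*\otimes_{\C[\Mp]}\Ind_\Gamma^{\Mp}V$ with $\Hom_{\Mp}(W(F),\Ind_\Gamma^{\Mp}V)$ via the coinvariants--invariants isomorphism for representations factoring through a finite quotient. The paper states the corollary as an immediate consequence of the theorem without spelling out this last identification (it is made explicit only later, in the proof of Theorem~\ref{thm:third-isomorphism}), so your argument is in fact more detailed than the original.
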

The corollary 
reduces the study of $J_{\frac n2,F}(\Gamma,V)$ to the
problem of describing the decomposition of $W(F)$ and $\Ind_{\Gamma}^{\Mp} V$ into
irreducible parts.

For $k\ge \frac n2 +2$, the last theorem makes it possible to derive an explicit formula
for the dimension of the space of Jacobi forms of weight $k$. In fact,
the isomorphism of the theorem can be rewritten as
$$
J_{\frac n2,F} \cong M_{k-\frac n2}\otimes_{\C[\Mp]}(W(F)\otimes
\Ind_{\Gamma}^{\Mp} V)
.
$$
But here (using the obvious map $h\otimes v\mapsto h\cdot v$) the
right hand side can be identified with the space $M_{k-\frac n2}(\rho)$
of holomorphic maps $h:\HP\rightarrow W(F)\otimes \Ind V$
which satisfy the transformation law
$h(A\tau)w(t)^{-2k}=\rho(\alpha)\left(h(\tau)\right)$ for all $(A,w)$
in $\Mp$, where $\rho$ denotes the representation afforded by the
$\Mp$-module $W(F)\otimes \Ind V$, and which satisfy
the usual regularity conditions at the cusps.  The dimensions of the
spaces $M_k(\rho)$ have been computed in \cite[p.~100]{Sko 85} (see
also~\cite[Sec.~4.3]{Eh-S 95}) using the Eichler-Selberg trace
formula. In our context these formulas read as follows:
\begin{Theorem}
\label{thm:dimension-formula}
Let $F$ be a half-integral positive definite $n\times n$ matrix, let
$k\in\frac12\Z$, and, for a subgroup $\Gamma$ of $\Mp$, let $V$ be a
$\Gamma$-module such that the associated representation of $\Gamma$
has finite image in $\sym{GL}(V)$.  Then one has
\begin{equation*}
\begin{split}
	 \dim &J_{k,F}(\Gamma,V) 
	 -
	 \dim M_{2+\frac n2 - k}^{\text cusp}\otimes_{\C[\Mp]} \conj{X(i^{n-2k})}\\
	 &=
	 \frac{k - \frac n2 - 1}2\,\dim X(i^{n-2k})
	 +\frac 14
	 \sym{Re}\left(e^{\pi i (k-\frac n2 )/2}\sym{tr}((S,w_S),X(i^{n-2k}))\right)\\
	 &+\frac 2{3\sqrt 3}
	 \sym{Re}\left(e^{\pi i (2k - n + 1)/6}\sym{tr}((ST,w_{ST}),X(i^{n-2k}))\right)
	 -\sum_{j=1}^r(\lambda_j-\frac12)
	 .
\end{split}
\end{equation*}
Here $X(i^{n-2k})$denotes the $\Mp$-submodule of all $v$ in
$W(F)\otimes\Ind_\Gamma^{\Mp} V$ such that $(-1,i)v=i^{n-2k}v$;
moreover, $0\le \lambda_j < 1$ are rational numbers such that
$\prod_{j=1}^r(t-e^{2\pi i\lambda_j})\in\C[t]$ equals the
characteristic polynomial of the automorphism of $X(i^{n-2k})$ given
by $v\mapsto (T,1)v$. (Recall that $T=[1,1;0,1]$ and $S=[0,-1;1,0]$).
\end{Theorem}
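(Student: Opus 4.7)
The strategy is a two-stage reduction: first rewrite $J_{k,F}(\Gamma,V)$ as a space of holomorphic maps $\HP\to W(F)\otimes \Ind_\Gamma^{\Mp}V$ transforming under a finite-dimensional representation $\rho$ of $\Mp$, and then invoke the dimension formula for such spaces established in~\cite[p.~100]{Sko 85} by means of the Eichler--Selberg trace formula.

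For the first stage, Theorem~\ref{thm:first-isomorphism} identifies $J_{k,F}(\Gamma,V)$ with $\bigl(M_{k-n/2}\otimes W(F)^*\bigr)\otimes_{\C[\Mp]}\Ind_\Gamma^{\Mp}V$. Because $W(F)$ is finite-dimensional and unitary, the dual $W(F)^*$ is identified with a concrete left $\Mp$-module (via the pairing of section~\ref{sec:weil-representations}, which gives $W(F)^c\cong W(-F)$), and a standard manipulation of tensor products over $\C[\Mp]$ rewrites the result as $M_{k-n/2}\otimes_{\C[\Mp]}\bigl(W(F)\otimes\Ind_\Gamma^{\Mp}V\bigr)$, exactly as recorded in the paragraph preceding the theorem. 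The map $h\otimes v\mapsto h\cdot v$ then identifies this space with $M_{k-n/2}(\rho)$, the space of holomorphic $W(F)\otimes\Ind V$-valued maps transforming by the representation $\rho$ carried by the target and satisfying the usual regularity condition at the cusps.

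For the second stage, the central element $(-1,i)\in\Mp$ acts on scalar-valued weight-$\kappa$ modular forms as $i^{-2\kappa}$, so in the tensor product $M_\kappa\otimes_{\C[\Mp]}Y$ only the eigenspace of $Y$ on which $(-1,i)$ acts by the compensating eigenvalue survives. For $\kappa=k-n/2$ and $Y=W(F)\otimes\Ind V$ this eigenspace is precisely $X(i^{n-2k})$ as defined in the statement, so
$$
\dim J_{k,F}(\Gamma,V)=\dim M_{k-n/2}\bigl(\rho\bigr|_{X(i^{n-2k})}\bigr).
$$
Now I would quote the dimension formula of~\cite[p.~100]{Sko 85} (see also~\cite[Sec.~4.3]{Eh-S 95}), which expresses, for any finite-dimensional $\Mp$-representation $\rho'$ of the appropriate central character, the difference $\dim M_\kappa(\rho')-\dim M_{2-\kappa}^{\text{cusp}}(\conj{\rho'})$ as a main term linear in $(\kappa-1)\dim\rho'$, plus the elliptic contributions from the order-$4$ element $(S,w_S)$ and the order-$6$ element $(ST,w_{ST})$ weighted by $\sym{tr}\rho'$ at these elements and by the phases $e^{\pi i(\kappa-1)/2}$ and $e^{\pi i(2\kappa-1)/6}$ coming from their images under the central character, plus the cusp correction $-\sum_j(\lambda_j-\tfrac12)$ built from the eigenvalues $e^{2\pi i\lambda_j}$ of $(T,1)$ on $\rho'$. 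The conjugate representation $\conj{X(i^{n-2k})}$ on the left-hand side arises precisely through the Serre-duality pairing between $M_\kappa(\rho')$ and $M_{2-\kappa}^{\text{cusp}}(\conj{\rho'})$. Substituting $\kappa=k-n/2$ and $\rho'=\rho\bigr|_{X(i^{n-2k})}$ yields the claimed identity.

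The main obstacle is bookkeeping rather than conceptual: tracking carefully the identifications $W(F)^*\leftrightarrow W(-F)$, the sign conventions of the $|_{k,F}$-action, and the normalizations of $(S,w_S)$, $(ST,w_{ST})$, $(T,1)$ and the central character so that the resulting phase factors and traces match those produced by the trace formula in~\cite{Sko 85}. No new mathematical input beyond that reference is required.
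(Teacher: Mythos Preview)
Your proposal is correct and follows essentially the same route as the paper: the paragraph immediately preceding the theorem already records the rewriting $J_{k,F}(\Gamma,V)\cong M_{k-n/2}\otimes_{\C[\Mp]}(W(F)\otimes\Ind_\Gamma^{\Mp}V)\cong M_{k-n/2}(\rho)$ via $h\otimes v\mapsto h\cdot v$, and then defers to the dimension formula for $M_\kappa(\rho)$ in~\cite[p.~100]{Sko 85}. Your added remark isolating the eigenspace $X(i^{n-2k})$ via the action of the central element $(-1,i)$ makes explicit a step the paper leaves implicit, but otherwise the arguments coincide.
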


Note that the traces on the space $X(i^{n-2k})$ occurring on the
right hand side of the dimension formula can be easily computed from
the explicit formulas for the action of $(S,w_S)$ and $(T,w_T)$
on $W(F)\otimes \Ind V$. Note also that the second term occurring on the left hand side
vanishes for $k\ge\frac n2+2$. In general, this second term may be
interpreted as the dimension of the space $J_{2 + n - k,F}^{\text
skew}(\Gamma,V)$ of {\it skew-holomorphic} Jacobi forms, which can be
defined by requiring that Theorem~\ref{thm:first-isomorphism} holds
true for skew-holomorphic Jacobi forms if one replaces the
$\Mp$-module $M_k$ on the right hand side by $\conj{M}_k$.

It remains to discuss the case of weights $k=\frac n2 +\frac12,\frac
n2 + 1,\frac n2 +\frac32$.  For weight $k=\frac n2 + 1$ the second
term on the left hand side refers to elliptic modular forms of weight
$1$. Accordingly, this term is in general unknown.

If $k=\frac n2 +\frac32$ then the second term on the left hand side
equals the dimension of
the space of cusp forms in $J_{\frac {n+1}2,F}^{\text skew}(\Gamma,V)$,
which can be investigated by the methods of the
next section. However, we shall not pursue this any further in this
article.  Finally, the case of critical weight $k=\frac {n+1}2$ will
be considered by different methods in the next section.

To end this section we remark that Theorem~\ref{thm:first-isomorphism}
remains true if we replace on both sides of the isomorphism the space
of Jacobi forms and modular forms by the respective subspaces of cusp
forms (as can be easily read off from the proof of the
theorem). Moreover, Theorem~\ref{thm:dimension-formula}, which gives an
explicit formula for
$\dim J_{k,F}(\Gamma,V) - \dim J_{2+n-k,m}^{\text skew, cusp}(\Gamma,V)$,
remains true if the left hand side is replaced
by
$\dim J_{k,F}^{\text cusp}(\Gamma,V) - \dim J_{2+n-k,F}^{\text skew}(\Gamma,V)$
and if we subtract from the right hand side the
dimension of the maximal subspace of $X(i^{n-2k})$ which is invariant
under $(T,1)$.  For deducing this from
Theorem~\ref{thm:first-isomorphism} we refer the reader again
to~\cite[p.~100]{Sko 85}).

%%%%%%%%%%%%%%%%%%%%%%%%%%%%%%%%%%%%%%%%%%%%%%%%%%%%%%%%%%%%%%%%%%%%%%%%%%%%%
\section{Jacobi Forms of Critical Weight}
\label{sec:critical-weight}

In this section we study the spaces $J_{\frac {n+1}2,F}(\Gamma,V)$,
where $n$ denotes the size of~$F$.  We are mainly interested in the
cases $J_{\frac {n+1}2, F}$ and $J_{1,N}(\varepsilon^a)$. Here
$\varepsilon$ denotes the character of $\Mp=\Mpp$ given by
$\varepsilon(A,w)=\eta(A\tau)/\big(w(\tau)\eta(\tau)\big)$, where
$\eta(\tau)$ is the Dedekind eta-function. It is a well-known fact
that $\varepsilon$ generates the group of one dimensional characters
of $\Mp$.  At the end of this section we add a remark concerning the
case where~$\Gamma=\Gamma_0(l)$ and where $V$ is the trivial
$\Gamma$-module $\C(1)$; a more thorough discussion of this case with
complete proofs will be given in~\cite{I-S 07}.

We assume that $\Gamma$ is a congruence subgroup.  By
Theorem~\ref{thm:first-isomorphism} and the subsequent remark we need
first of all a description of the
$\Gamma$-module~$M_{\frac12}^{\text{cong}}$ of all modular forms of
weight $\frac12$ on congruence subgroups of $\Gamma$.  Starting with
the observation that $M_{\frac12}$ is generated by theta series
(cf.~\cite{Se-S 77}) the decomposition of the $\Mp$-module
$M_{\frac12}$ into irreducible parts was calculated
in~\cite[p.~101]{Sko 85}.  As an immediate consequence of the result
loc.~cit. we obtain
\begin{gather*}
	 M_{\frac 12}(\Gamma(4m))
	 \cong
	 \bigoplus_{\begin{subarray}{c}l|m\\
		  m/l \text{ squarefree}\end{subarray}}
	  \big(W(l)^\epsilon\big)^*
	 ,
	 \\
	 M_{\frac 12}^{\text Eis.}(\Gamma(4m))
	 \cong
	 \bigoplus_{\begin{subarray}{c}l|m\\
		  m/l \text{ squarefree}\end{subarray}}
	  \big(W(l)^{O(l)}\big)^*
\end{gather*}
Here $\epsilon$ denotes the element $\epsilon:x\mapsto -x$ of $O(l)$,
and $W(l)^\epsilon$ and $W(l)^{O(l)}$ denote the subspaces of
elements of $W(l)$ which are invariant under $\epsilon$ and
$O(l)$, respectively. (For deducing this from \cite[p.~101,
Theorem~5.2]{Sko 85} one also needs~\cite[Theorem~1.8, p.~22]{Sko 85}).

If we insert this into the isomorphism of
Theorem~\ref{thm:first-isomorphism},
we obtain
\begin{Theorem}
\label{thm:second-isomorphism}
For a congruence subgroup $\Gamma$ of $\Mp$, let $V$ be a finite dimensional
$\Gamma$-module, and let $F$ be half-integral of size $n$ and level
$f$.  Assume that, for some $m$, the group $\Gamma(4m)^*$
acts trivial on~$V$ and that $f$ divides $4m$. Then there are natural
isomorphisms
\begin{gather*}
	 J_{\frac {n+1}2,F}(\Gamma,V)
	 \longrightarrow
	 \bigoplus_{\begin{subarray}{c}l|m\\
		  m/l \text{ squarefree}\end{subarray}}
	 \Big(W\big((l)\oplus F\big)^{\epsilon\times 1}\Big)^*\otimes_{\C[\Mp]}\Ind_\Gamma^{\Mp}V
	 ,
	 \\
	 J_{\frac {n+1}2,F}^{\text Eis.}(\Gamma,V)
	 \longrightarrow
	 \bigoplus_{\begin{subarray}{c}l|m\\
		  m/l \text{ squarefree}\end{subarray}}
	 \Big(W\big((l)\oplus F\big)^{O(l)\times 1}\Big)^*\otimes_{\C[\Mp]}\Ind_\Gamma^{\Mp}V
	 .
\end{gather*}
Here $O(l)\times 1$ denotes the subgroup
of $O\big((l)\oplus F\big)$
of all elements of the form
$(x,y)\mapsto (\alpha(x),y)$ ($x\in D(l)$, $y\in D(F)$, $\alpha\in O(l)$),
and $\epsilon\times 1$ denotes the special element
of $O(l)\times 1$ given by $(x,y)\mapsto (-x,y)$.
\end{Theorem}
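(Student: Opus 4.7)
The plan is to apply Theorem~\ref{thm:first-isomorphism} (via the Remark immediately following it) to rewrite $J_{\frac{n+1}{2},F}(\Gamma,V)$ as a $\C[\Mp]$-tensor product involving $M^{\text{cong}}_{\frac12}$, and then to substitute the explicit decomposition of $M_{\frac12}(\Gamma(4m))$ recalled just before the statement. First, since $\Gamma(4m)^*$ acts trivially on $V$, the Remark after Theorem~\ref{thm:first-isomorphism} gives a natural isomorphism
$$
J_{\frac{n+1}{2},F}(\Gamma,V) \;\cong\; \bigl(M^{\text{cong}}_{\frac12}\otimes W(F)^*\bigr)\otimes_{\C[\Mp]}\Ind_\Gamma^{\Mp}V.
$$
I would then observe that only the $\Gamma(4m)^*$-invariant part of $M^{\text{cong}}_{\frac12}$ contributes to this tensor product. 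Indeed, $\Ind_\Gamma^{\Mp}V$ is $\Gamma(4m)^*$-trivial because $\Gamma(4m)^*$ is normal in $\Mp$ and acts trivially on $V$, while $W(F)^*$ is $\Gamma(4m)^*$-trivial because the level $f$ of $F$ divides $4m$ (as recalled in section~\ref{sec:weil-representations}, $\Gamma(f)^*$ acts trivially on $W(F)$). That invariant subspace is precisely $M_{\frac12}(\Gamma(4m))$.

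Next I would substitute the stated decomposition $M_{\frac12}(\Gamma(4m)) \cong \bigoplus_{l|m,\ m/l \text{ squarefree}}\bigl(W(l)^{\epsilon}\bigr)^*$ and invoke the functorial identity $W(l)\otimes W(F) \cong W\bigl((l)\oplus F\bigr)$ recalled in section~\ref{sec:weil-representations}. Under this identity, the automorphism $\epsilon$ acting on $W(l)$ (and trivially on $W(F)$) corresponds to the orthogonal automorphism $\epsilon\times 1 \in O\bigl((l)\oplus F\bigr)$ acting on $W\bigl((l)\oplus F\bigr)$. Since taking invariants distributes across a tensor factor on which the group acts trivially, one obtains
$$
W(l)^{\epsilon}\otimes W(F) \;\cong\; W\bigl((l)\oplus F\bigr)^{\epsilon\times 1}.
$$
Passing to duals (both factors are finite dimensional, so duals distribute over tensor products) and plugging into the previous formula yields the first claimed isomorphism.

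For the Eisenstein subspace I would repeat the argument verbatim, replacing $M_{\frac12}$ by $M_{\frac12}^{\text{Eis.}}$ and using $M_{\frac12}^{\text{Eis.}}(\Gamma(4m)) \cong \bigoplus\bigl(W(l)^{O(l)}\bigr)^*$ together with $W(l)^{O(l)}\otimes W(F) \cong W\bigl((l)\oplus F\bigr)^{O(l)\times 1}$. The main obstacle is essentially organizational: one has to check that the isomorphism $W(l)\otimes W(F)\cong W\bigl((l)\oplus F\bigr)$ really does intertwine $\epsilon$ (respectively $O(l)$) acting only on the first factor with $\epsilon\times 1$ (respectively $O(l)\times 1$) acting on $W\bigl((l)\oplus F\bigr)$. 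This follows directly from the explicit formulas for the Weil action on basis elements $\pt x$, but must be tracked carefully so that the $\Mp$-module structures on both sides of the claimed isomorphism agree.
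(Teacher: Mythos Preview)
Your proposal is correct and follows essentially the same approach as the paper, which simply says to ``insert'' the decomposition of $M_{\frac12}(\Gamma(4m))$ into the isomorphism of Theorem~\ref{thm:first-isomorphism}. You have merely made explicit the steps the paper leaves to the reader: the reduction from $M^{\text{cong}}_{\frac12}$ to $M_{\frac12}(\Gamma(4m))$ via the $\Gamma(4m)^*$-triviality of the other tensor factors, and the identification $W(l)^\epsilon\otimes W(F)\cong W\big((l)\oplus F\big)^{\epsilon\times 1}$ under the functorial isomorphism $W(M)\otimes W(N)\cong W(M\perp N)$.
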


Thus the description of Jacobi forms of critical weight reduces to a
problem in the theory of finite dimensional representations of $\Mp$.
Actually, the description of Jacobi forms of critical weight reduces
to an even more specific problem, namely to the problem of determining
the invariants of Weil representations associated to finite quadratic
modules. To make this more precise we rewrite the first ismorphism of
Theorem~\ref{thm:second-isomorphism} as
\begin{Theorem}
\label{thm:third-isomorphism}
Under the same assumptions as in Theorem~\ref{thm:second-isomorphism}
the applications 
$$
\sum_{j}\pt{x_j}\otimes\pt{y_j}\otimes w_j
\mapsto
\sum_j\vartheta_{l,x_j}(\tau,0)\,\vartheta_{F,y_j}(\tau,z)\,w_j
$$
define an isomorphism
$$
J_{\frac {n+1}2,F}(\Gamma,V)
\longleftarrow
\bigoplus_{\begin{subarray}{c}l|m\\
	 m/l \text{ squarefree}\end{subarray}}
\Inv\Big(W(-l)^\varepsilon\otimes W(-F)\otimes \Ind_\Gamma^{\Mp}V\Big)
.
$$
\end{Theorem}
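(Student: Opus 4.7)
The plan is to rewrite the first isomorphism of Theorem~\ref{thm:second-isomorphism} as an invariants space by a chain of standard identifications, and then to verify that the resulting map takes the explicit form claimed. First, I would apply the functorial property $W\big((l)\oplus F\big)\cong W(l)\otimes W(F)$ as $\Mp$-modules (with diagonal action), recalled just before Theorem~\ref{thm:small-rank}. Since the subgroup $\epsilon\times 1$ acts only on the first tensor factor, this restricts to $W\big((l)\oplus F\big)^{\epsilon\times 1}\cong W(l)^\epsilon\otimes W(F)$, whose dual splits as $\big(W(l)^\epsilon\big)^*\otimes W(F)^*$.

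Next, I would use the $\Mp$-invariant perfect pairing from Section~\ref{sec:weil-representations} to identify $W(M)^c\cong W(-M)$ as left $\Mp$-modules. Under this identification, the involution $x\mapsto -x$ on the common underlying group $D_l=D_{-l}$ is preserved, giving $\big(W(l)^\epsilon\big)^c\cong W(-l)^\epsilon$. Hence the left $\Mp$-module conjugate to the dual of $W(l)^\epsilon\otimes W(F)$ is canonically $W(-l)^\epsilon\otimes W(-F)$.

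Then I would convert the $\otimes_{\C[\Mp]}$ in Theorem~\ref{thm:second-isomorphism} into a space of invariants. By the assumption that $f\mid 4m$ and that $\Gamma(4m)^*$ acts trivially on $V$, the $\Mp$-actions on both $W(l)\otimes W(F)$ and $\Ind_\Gamma^\Mp V$ factor through the common finite quotient $G:=\Mp/\Gamma(4m)^*$. For any right $G$-module $A^*$ and left $G$-module $B$, the standard averaging argument (coinvariants equal invariants for a finite group over $\C$) yields a natural isomorphism $A^*\otimes_{\C[G]}B\cong \Inv\big(A^c\otimes B\big)$ with respect to the diagonal $G$-action. Applied to $A=W(l)^\epsilon\otimes W(F)$ and $B=\Ind_\Gamma^\Mp V$, combined with the previous steps, this turns the $l$-summand of Theorem~\ref{thm:second-isomorphism} into
$$
\Inv\Big(W(-l)^\epsilon\otimes W(-F)\otimes\Ind_\Gamma^\Mp V\Big).
$$

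Finally, I would track the explicit formula through this chain. Under the isomorphism of Theorem~\ref{thm:first-isomorphism}, the dual basis vector $\pt y^*\in W(F)^*$ corresponds to the theta series $\vartheta_{F,y}(\tau,z)$; under the decomposition of $M_{1/2}^{\text{cong}}$ recalled at the start of Section~\ref{sec:critical-weight}, the dual basis vector $\pt x^*\in W(l)^*$ corresponds to $\vartheta_{l,x}(\tau,0)$. Under the pairing of step two, the basis element $\pt x\in W(-l)$ (resp.\ $\pt y\in W(-F)$) corresponds to $\pt x^*$ (resp.\ $\pt y^*$). Composing, the invariant $\sum_j\pt{x_j}\otimes\pt{y_j}\otimes w_j$ maps to $\sum_j\vartheta_{l,x_j}(\tau,0)\vartheta_{F,y_j}(\tau,z)\,w_j$, as asserted. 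The main obstacle is precisely this bookkeeping in step four: one must be careful to distinguish $V^*$ from $V^c$, to confirm that the averaging isomorphism introduces no extra scalar, and to check that the invariant theta decomposition on the $M_{1/2}^{\text{cong}}$ side is compatible with the identifications chosen. Steps one through three are essentially formal consequences of results already collected in the paper.
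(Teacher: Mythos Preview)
Your proposal is correct and follows essentially the same route as the paper: the paper's argument after the theorem invokes exactly the three ingredients you use, namely the identification $A\otimes_{\C[G]}B\cong (A'\otimes B)^G$, the isomorphism $W(M)^c\cong W(-M)$, and the explicit form of the isomorphism from Theorem~\ref{thm:first-isomorphism}. If anything you are more careful than the paper, which asserts the coinvariants--invariants identification ``for any group $G$'' without comment, whereas you correctly note that one must pass to the finite quotient $\Mp/\Gamma(4m)^*$ to justify the averaging.
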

Here we used that, for any group $G$, any $G$-right module $A$ and
$G$-left module $B$, the spaces $A\otimes_{\C[G]} B$ and $(A'\otimes
B)^G$  are naturally isomorphic (where $A'$ denotes the $G$-left
module with underlying space $A$ and action $(g,a)\mapsto a\cdot
g^{-1}$), we used the isomorphism (of vector spaces) of $W(G)^c$ with
$W(-G)$ (cf.~section~\ref{sec:weil-representations}), and we wrote out
explicitly the isomorphism constructed in the proof of
Theorem~\ref{thm:first-isomorphism}.
\begin{Remark}
Next, consider a decomposition
$\Ind_\Gamma^{\Mp}V=\bigoplus_j W_j$ into irreducible $\Mp$-submodules
$W_j$.  By the results in \cite{N-W 76} every irreducible
representation of $\Mp$ is equivalent to a subrepresentation of a Weil
representation $W(M)$ for a suitable finite quadratic module
$M$\footnote{%
Actually it is proven in \cite{N-W 76} that, for prime
powers $m$, every finite dimensional irreducible
$\SL[\Ring{m}]$-module is contained in a Weil representation
associated to some finite quadratic module.  Since $\SL[\Ring{m}]$,
for arbitrary $m$, is the direct product of the groups
$\SL[\Ring{p^n}]$, where $p^n$ runs through the exact prime powers
dividing $m$, and since the category of Weil representations $W(M)$ is
closed under tensor products we can dispense with the assumption of $m$
being a prime power. Hence every irreducible $\SL$-module which is
acted on trivially by some congruence subgroup is isomorphic to a
submodule of some $W(M)$. Finally, if $\rho$ is an irreducible
representation of $\Mp$ which does not factor through a representation
of $\SL$, i.e.~which satisfies $\rho(1,-1)\not=1$, but which factors
through some congruence subgroup $\Gamma(4N)^*$ then
$\rho/\varepsilon$ factors through a representation of
$\SL[\Ring{4N'}]$ for some $N'$. Hence by the preceding argument,
$\rho/\varepsilon$ is afforded by a submodule of a Weil representation
$W(M)$. But $\varepsilon$ is afforded by a submodule in $D(6)$
(cf.~section~\ref{sec:proofs}), and hence
$\rho$ is afforded by a submodule of the Weil representation
$D(6)\otimes W(M)$.
}.
Hence we may replace the $W_j$ by submodules of
Weil representations $W(M_j)$, and at the end we find a natural
injection
$$
J_{\frac {n+1}2,F}(\Gamma,V)
\hookrightarrow
\bigoplus_j
\bigoplus_{\begin{subarray}{c}l|m\\
	 m/l \text{ squarefree}\end{subarray}}
\Inv\big(D_{-l}\perp D_{-F}\perp M_j\big)
.
$$
Here the precise image can also be characterized by the action of
the groups $O(-l)$ on $D_{-l}$ and $O(M_j)$ on $W(M_j)$ (and certain
additional intertwiners of the $\Mp$-action), so that the last
isomorphism could be written in an even more explicit form. We shall
not pursue this any further in this article.
\end{Remark}

We note a special case of
Theorem~\ref{thm:second-isomorphism}. Namely, if $V$ is the trivial
$\Mp$-module then the right hand side of, say, the first formula of
Theorem~\ref{thm:second-isomorphism} becomes the space of elements
in $W\big((l)\oplus F\big)^*$ which are invariant under $\Mp$ and
$\epsilon\times 1$.  Using again the natural isomorphism between the
spaces $W\big((l)\oplus F\big)^c$ and $W\big((-l)\oplus (-F)\big)$
(cf.~sect.~\ref{sec:weil-representations}) we thus obtain
\begin{Theorem}
\label{thm:fourth-isomorphism}
For any half-integral $F$ of size $n$ and level dividing $4m$ there
are natural isomorphisms
\begin{gather*}
	 J_{\frac {n+1}2,F}
	 \cong
	 \bigoplus_{\begin{subarray}{c}l|m\\
		  m/l \text{ squarefree}\end{subarray}}
	 \Inv\big((-l)\oplus (-F)\big)^{\epsilon\times 1}
	 ,\\
	 J_{\frac {n+1}2,F}^{\text Eis.}
	 \cong
	 \bigoplus_{\begin{subarray}{c}l|m\\
		  m/l \text{ squarefree}\end{subarray}}
	 \Inv\big((-l)\oplus (-F)\big)^{O(-l)\times 1}
	 .
\end{gather*}
(Here $O(-l)\times 1$ and $\epsilon\times 1$
are as explained in Theorem~\ref{thm:second-isomorphism}.)
\end{Theorem}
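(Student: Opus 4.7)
The plan is to deduce this from Theorem~\ref{thm:second-isomorphism} by specializing to $\Gamma = \Mp$ and $V = \C(1)$, the trivial $\Mp$-module. For this choice $\Ind_\Mp^{\Mp} \C(1) = \C(1)$, and the right-hand side of Theorem~\ref{thm:second-isomorphism} collapses to $\bigoplus_{l} \bigl(W\bigl((l)\oplus F\bigr)^{\epsilon\times 1}\bigr)^* \otimes_{\C[\Mp]} \C(1)$ for the first isomorphism, and analogously with $O(l)\times 1$ in place of $\epsilon\times 1$ for the Eisenstein case.

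The key identity to unwind this is the natural isomorphism $A^* \otimes_{\C[\Mp]} \C(1) \cong \Inv(\conj A)$, valid for any finite dimensional $\Mp$-module $A$ on which some congruence subgroup acts trivially. Indeed, tensoring the right $\Mp$-module $A^*$ over $\C[\Mp]$ with the trivial module $\C(1)$ is by construction the space of coinvariants; since $A$ factors through a finite quotient of $\Mp$, averaging over that quotient identifies coinvariants with invariants; and the $\Mp$-invariants of the right module $A^*$ coincide with those of the left module $\conj A$ (both amount to the $\lambda\in A^*$ satisfying $\lambda\circ g = \lambda$ for all $g$). Every Weil representation $W(M)$ factors through some $\Mp/\Gamma(l_M)^*$, so the hypothesis holds for $A = W\bigl((l)\oplus F\bigr)^{\epsilon\times 1}$ and its Eisenstein analogue. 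Applying the identity, together with $W(M \perp N)\cong W(M)\otimes W(N)$ and $W(M)^c \cong W(-M)$ from Section~\ref{sec:weil-representations}, turns the right-hand side of Theorem~\ref{thm:second-isomorphism} into $\bigoplus_l \Inv\bigl(W\bigl((-l)\oplus(-F)\bigr)\bigr)^{\epsilon\times 1}$ (respectively the $O(-l)\times 1$-version), which is the claim.

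The only nontrivial point to verify is that taking $(\epsilon\times 1)$-invariants, or more generally $H$-invariants for any subgroup $H$ of $O\bigl((l)\oplus F\bigr)$, commutes with passage to the conjugate module. This is immediate: the finite quadratic modules $(-l)\oplus(-F)$ and $(l)\oplus F$ share the same underlying abelian group, hence the same orthogonal group, and the canonical pairing $\{\pt x,\pt y\} = \delta_{x,y}$ of Section~\ref{sec:weil-representations} intertwining $W(M)^c$ with $W(-M)$ is visibly $O(M)$-equivariant, since the $O(M)$-actions on $W(M)$ and $W(-M)$ are both defined by the formula $\alpha \cdot \pt x = \pt{\alpha(x)}$. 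The main, if modest, obstacle is therefore keeping track of two commuting group actions (the $\Mp$-action and the $O(M)$-action) through dualization and tensor products; since both actions are unitary on finite-dimensional spaces, nothing deeper than finite-group averaging and direct inspection is required.
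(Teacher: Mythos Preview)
Your proposal is correct and follows essentially the same route as the paper: specialize Theorem~\ref{thm:second-isomorphism} to the trivial $\Mp$-module $V=\C(1)$, identify $A^*\otimes_{\C[\Mp]}\C(1)$ with the $\Mp$-invariants of $\conj A$, and then apply $W(M)^c\cong W(-M)$. The paper compresses all of this into one sentence before the theorem statement; your version simply makes explicit the coinvariants-equals-invariants step (via averaging over the finite quotient) and the compatibility of the $O(M)$-action with dualization, neither of which the paper spells out.
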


Note that $O(-l)\times 1$ acts on $\Inv\big((-l)\oplus(-F)\big)$
since the action of $O(-l)\times 1$ intertwines with the action
of $\Mp$ on the space~$W\big((-l)\oplus(-F)\big)$.
Note also that this Theorem is a trivial statement
if $n$ is even since then both sides of the claimed isomorphism are
$0$ for trivial reasons (consider the action of $(-1,1)$).  If the
size of $F$ is odd then the level of $F$ is divisible by $4$, and
hence we may choose $4m$ equal to the level of $F$.

If we take $F$ equal to a number $m$, then $W\big((-l)\oplus(-m)\big)$
does not contain nontrivial invariants (see the remark following
Theorem~\ref{thm:small-rank}.).  The theorem thus implies $J_{1,N}=0$,
a result which was proved in~\cite[Satz~6.1]{Sko 85}. More generally,
Theorem~\ref{thm:fourth-isomorphism} and the remark following
Theorem~\ref{thm:small-rank} imply
\begin{Theorem}
\label{thm:first-vanishing-result}
Let $F$ be half-integral of size $n$. If $n\not\equiv 7\bmod 8$
and $F$ has at most one nontrivial elementary divisor
then~$J_{\frac {n+1}2,F}=0$.
\end{Theorem}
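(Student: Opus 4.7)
The plan is to combine Theorem~\ref{thm:fourth-isomorphism} with Theorem~\ref{thm:small-rank} (and Milgram's formula $\sigma(D_F)=\e_8(-\text{signature}(2F))$) to force the relevant invariant spaces to vanish on parity grounds. Choose~$m$ so that $4m$ is a multiple of the level of~$F$. By Theorem~\ref{thm:fourth-isomorphism}, $J_{\frac{n+1}2,F}$ embeds in $\bigoplus_{l\mid m,\,m/l\text{ sqfree}} \Inv\big((-l)\oplus(-F)\big)$, so it suffices to prove that $\Inv\big((-l)\oplus(-F)\big)=0$ for every such~$l$.

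Next I would verify the hypotheses of Theorem~\ref{thm:small-rank} for the quadratic module $M:=(-l)\oplus(-F)$. The hypothesis ``$F$ has at most one nontrivial elementary divisor'' means that the finite abelian group $D_F=\Z^n/2F\Z^n$ is cyclic; together with the fact that $D_{(-l)}=\Z/2l\Z$ is cyclic, this yields $\dim_{\F_p} D_M\otimes\F_p \le 2$ for every prime $p$. Hence Theorem~\ref{thm:small-rank} applies and tells us that $\Inv(M)\ne 0$ forces $\sigma_p(M)=1$ for every prime $p$, which in turn forces $\sigma(M)=\prod_p\sigma_p(M)=1$.

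Finally I would compute $\sigma(M)$ directly from Milgram's formula. Since $-l$ has signature $-1$ and $-F$ has signature $-n$, the matrix $2((-l)\oplus(-F))$ has signature $-(n+1)$, so
\[
\sigma(M) \;=\; \e_8\bigl(-\text{signature}(2M)\bigr) \;=\; \e_8(n+1).
\]
This is $1$ if and only if $n+1\equiv 0\bmod 8$, i.e.\ $n\equiv 7\bmod 8$. Under the hypothesis $n\not\equiv 7\bmod 8$ we conclude $\sigma(M)\ne 1$, hence $\Inv(M)=0$, and running over all admissible~$l$ gives $J_{\frac{n+1}2,F}=0$.

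There is essentially no obstacle here beyond checking the bookkeeping: the only subtlety is translating ``at most one nontrivial elementary divisor'' into cyclicity of $D_F$, and verifying the signature count for the negative-definite quadratic module $(-l)\oplus(-F)$ (the remark following Theorem~\ref{thm:small-rank} does the positive-definite case, and the signs simply reverse). All the heavy lifting has already been done in Theorems~\ref{thm:fourth-isomorphism} and~\ref{thm:small-rank}, so the proof reduces to a one-line application of Milgram's formula.
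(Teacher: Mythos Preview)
Your argument is correct and is precisely the approach of the paper: the paper derives Theorem~\ref{thm:first-vanishing-result} in one line from Theorem~\ref{thm:fourth-isomorphism} together with the remark after Theorem~\ref{thm:small-rank}, and you have simply spelled out that remark in the negative-definite case $(-l)\oplus(-F)$ of size $n+1$, where Milgram's formula gives $\sigma=\e_8(n+1)$ instead of $\e_8(-n)$.
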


Note that we cannot dispense with the assumption $n\not\equiv7\bmod8$.
A counterexample can be easily constructed as follows.
Let $2G$ denote a Gram matrix
of the $E_8$-lattice then
$$
\theta(\tau,z):=\sum_{x\in\Z^8}\e(\tau G[x] + 2z^tGx)
\quad(z\in\C^8)
$$
defines an element of $J_{4,G}$.  If $M$ is an integral
$8\times7$-matrix then $F:=M^tGM$ is half-integral positive definite
of size $7$ and it is easily checked that
$$
\theta|U_M(\tau,w):=\theta(\tau,Mw)
\qquad(w\in\C^7)
$$
defines a nonzero element of $J_{4,F}$, hence a Jacobi form of critical weight.
Suitable choices of $G$ and $M$ yield an $F$ with exactly one elementary divisor\footnote{%
One can take
$$
	 2G=
	 \begin{pmatrix}
	 4&-2&0&0&0&0&0&1\\
	 -2&2&-1&0&0&0&0&0\\
	 0&-1&2&-1&0&0&0&0\\
	 0&0&-1&2&-1&0&0&0\\
	 0&0&0&-1&2&-1&0&0\\
	 0&0&0&0&-1&2&-1&0\\
	 0&0&0&0&0&-1&2&0\\
	 1&0&0&0&0&0&0&2
	 \end{pmatrix}
$$
and $2F$ equal to the matrix which is obtained by deleting
the last row and column of $2G$, which has 4 as sole nontrivial elementary divisor.
},
i.e.~an~$F$ satisfying the second assumption of the theorem.

For doing explicit calculations it is worthwhile
to write out explicitly the isomorphisms of
Theorem~\ref{thm:fourth-isomorphism}.
If~$v=\sum_{x,y}\lambda(x,y)\,\pt{(x,y)}$ is an element
of~$\Inv\big((-l)\oplus(-F)\big)$ then
$$
\phi_v(\tau,z)
:=
\sum_{\begin{subarray}{c} x\in\Ring{2l}\\
	 y \in
	 \Z^n/2F\Z^n\end{subarray}}
\lambda(x,y)\,\vartheta_{l,x}(\tau,0)\,\vartheta_{F,y}(\tau,z)
$$
defines a Jacobi form in $J_{\frac{n+1}2,F}$. It vanishes unless
$\lambda(x,y)$ is even in $x$, i.e.~unless $v$ is invariant under
$\epsilon\times 1$, and it defines an Eisenstein series if
$\lambda(x,y)$ is invariant under $O(-l)\times 1$.

We now turn to the case $J_{1,N}(\epsilon^a)$. Using the Jacobi forms
$$
\vartheta(\tau,az)
=
q^{1/8}\big(\zeta^{a/2}-\zeta^{-a/2}\big)
\prod_{n\ge 1}
\big(1-q^n\big)
\big(1-q^n\zeta^a\big)
\big(1-q^n\zeta^{-a}\big)
$$
for positive natural numbers $a$ and trying to
build {\it thetablocks}, i.e.~Jacobi forms which are products or quotients of these forms and powers
of the Dedekind eta-function, it turns out \cite{GSZ 07} that there are
indices $N$ and nontrivial Jacobi forms in $J_{1,N}(\epsilon^a)$ if
$a\equiv 2,4,6,8,10,14\bmod 24$.  Moreover, extensive computer aided
search suggests that for all other $a$ there are indeed no Jacobi
forms, and that all Jacobi forms of weight 1 can be obtained by the
indicated procedure built on the $\vartheta(\tau,az)$.  Note that in
fact $J_{1,N}=0$ for all $N$ \cite[[Satz~6.1]{Sko 85}.  As an
application of the theory developed so far we shall prove in
section~\ref{sec:proofs}
\begin{Theorem}
\label{thm:weight-one-vanishing}
For all positive integers~$N$,
one has $J_{1,N}(\epsilon^{16})=0$.
\end{Theorem}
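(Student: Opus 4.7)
The plan is to reduce the vanishing of $J_{1,N}(\varepsilon^{16})$ to showing that a certain twisted isotypic space of a Weil representation is zero, and then to settle this by an analysis at the prime~$3$. By Theorem~\ref{thm:first-isomorphism} and the subsequent remark (applicable because by Serre-Stark every element of $M_{1/2}$ is a theta series, and in particular a modular form on a congruence subgroup), applied with $k=1$, $F=(N)$, $\Gamma=\Mp$, and $V=\C(\varepsilon^{16})$, one has
\begin{equation*}
J_{1,N}(\varepsilon^{16}) \cong \bigl(M_{1/2}\otimes W(N)^*\bigr)\otimes_{\C[\Mp]}\C(\varepsilon^{16}).
\end{equation*}
Inserting the decomposition of $M_{1/2}$ into the modules $(W(l)^\epsilon)^*$ recalled at the start of this section, using $W(l)^*\cong W(-l)$, and rearranging exactly as in the proof of Theorem~\ref{thm:third-isomorphism}, this becomes
\begin{equation*}
J_{1,N}(\varepsilon^{16}) \cong \bigoplus_{l}\Inv\bigl(W(-l)^\epsilon\otimes W(-N)\otimes\C(\varepsilon^{16})\bigr),
\end{equation*}
where $l$ runs over positive divisors of a fixed $m$ with $m/l$ squarefree such that $\Gamma(4m)^*$ acts trivially on both $V$ and $W(N)$. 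Each summand is exactly the subspace of $W(-l)^\epsilon\otimes W(-N)$ on which $\Mp$ acts by the character $\varepsilon^{-16}=\varepsilon^{8}$, and it suffices to show that each is zero.

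Two elementary calculations dispose of many cases immediately. First, since $\varepsilon^{-16}(T,1)=\e(\tfrac13)$ and $T$ acts diagonally on $W((-l)\oplus(-N))$ with eigenvalues $\e\bigl(-\tfrac{x^2}{4l}-\tfrac{y^2}{4N}\bigr)$, any non-zero $\varepsilon^{-16}$-eigenvector must be supported on pairs $(x,y)$ satisfying $\tfrac{x^2}{4l}+\tfrac{y^2}{4N}\equiv-\tfrac13\pmod 1$; in particular $3\mid lN$ is necessary, so the summand vanishes trivially when $3\nmid lN$. Second, since $\sigma(D_{-l})=\sigma(D_{-N})=\e_8(1)$, the central element $(-I,i)=(S,w_S)^2$ acts on $W(-l)\otimes W(-N)$ as $\sigma(D_{-l})^2\sigma(D_{-N})^2\,(\epsilon\!\times\!\epsilon)=-(\epsilon\!\times\!\epsilon)$; since $\varepsilon^{-16}(-I,i)=1$, combined with the $\epsilon$-invariance on the first tensor factor this forces any eigenvector into $W(-l)^\epsilon\otimes W(-N)^{-\epsilon}$.

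To handle the remaining cases where $3\mid lN$, the plan is to localise at the prime~$3$ via the tensor-product factorisation $W((-l)\oplus(-N))\cong\bigotimes_p W(M_p)$, where $M_p$ is the $p$-part of $(-l)\oplus(-N)$; since $M_3$ has $\F_3$-rank at most~$2$, the decomposition of $W(M_3)$ into irreducible $\Mp$-subrepresentations is one of a short list of explicit possibilities from \cite[Theorem~1.8]{Sko 85} (recalled in Section~\ref{sec:proofs}). One then enumerates these cases and verifies, using the values of the Gauss-sum invariants $\sigma_3(D_{-l})$ and $\sigma_3(D_{-N})$ together with the $\epsilon$-symmetry, that the specific cubic character $\varepsilon^{-16}$ never occurs inside $W(-l)^\epsilon\otimes W(-N)^{-\epsilon}$ at $p=3$. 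The main obstacle is precisely this case-by-case verification. The key point is that it is the sign of $\sigma_3$ that distinguishes $\varepsilon^{-16}=\varepsilon^8$ from its complex conjugate $\varepsilon^{16}$: this sign systematically rules out the former, while leaving the latter possible; this is a concrete manifestation of the asymmetry that makes $J_{1,N}(\varepsilon^8)$ non-trivial in general (and supports the thetablock constructions of Theorem~\ref{thm:quarks}) while $J_{1,N}(\varepsilon^{16})$ vanishes identically.
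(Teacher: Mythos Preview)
Your reduction to the $\varepsilon^{-16}=\varepsilon^8$-isotypic component of $W(-l)^{\epsilon}\otimes W(-N)$ is correct, and the parity argument pinning this down to $W(-l)^{+}\otimes W(-N)^{-}$ is fine. The gap is in the final step: the claim that the character $\varepsilon^{8}$ never occurs ``at $p=3$'' inside $W(-l)^{+}\otimes W(-N)^{-}$ is simply false, and no amount of bookkeeping with $\sigma_3$ will rescue it. Take $l=1$ and $N=6$. Then $D_{-l}(3)=0$ and $D_{-N}(3)\cong L_3(1)$ (since $\sigma_3(D_{-6})=-i=\sigma(L_3(1))$), and $W(L_3(1))^{-}$ is exactly the one-dimensional module $\C(\varepsilon^{8})$. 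So the $3$-local isotypic piece you want to kill is nonzero. What actually forces the global vanishing in this example is that $\sigma_2(D_{-1}\perp D_{-6})=\e_8(1)\e_8(3)=-1$, so by Theorem~\ref{thm:hauptvermutung-for-small-rank} the $2$-local invariants vanish. In other words, the obstruction moves between primes, and an argument confined to $p=3$ cannot work.

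The paper's proof handles this interplay by first embedding $\C(\varepsilon^{8s})$ as $W_1^{-}(L_3(s))$ inside the honest Weil representation $W(L_3(s))$, so that the whole problem becomes $\Inv\big(D_{-l}\perp D_{-N}\perp L_3(s)\big)$, and then factoring over all primes. For $p\neq 3$ the $p$-part has $\F_p$-rank $\le 2$, and Theorem~\ref{thm:hauptvermutung-for-small-rank} forces both that $l$ is essentially determined and that $\big(\tfrac{-3}{q_p}\big)=1$ for every $p\neq 3$, whence $N/q_3\equiv 1\pmod 3$. Only with this congruence in hand does the $3$-local module, for $s=-1$, fail to contain any isotropic vector and hence any $T$-fixed line, giving $\Inv\big(M_l(3)\big)=0$. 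Your sketch omits the entire $p\neq 3$ analysis; without it the $p=3$ step does not go through.
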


Actually, we shall prove more. Namely, 
\begin{Theorem}
\label{thm:quarks}
For every positive integers~$N$,
the space $J_{1,N}(\epsilon^8)$
is spanned by the series
$$
\vartheta_\rho
:=
\sum_{\alpha \in {\mathcal O}}
\left(\frac{x(\alpha)}3\right)\,q^{|\alpha|^2/3}\zeta^{y(\alpha\rho)}
$$
Here ${\mathcal O}=\Z\big[\frac{-1+\sqrt{-3}}2\big]$, and we use
$x(\alpha)=\alpha+\overline\alpha$ and
$y(\alpha)=(\alpha-\overline\alpha)/\sqrt{-3}$. Moreover, $\rho$ runs
through all numbers in ${\mathcal O}$ with $|\rho|^2=N$.
\end{Theorem}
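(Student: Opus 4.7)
The plan is to apply Theorem~\ref{thm:third-isomorphism} to reduce $J_{1,N}(\varepsilon^8)$ to a sum of spaces of invariants of Weil representations, and then to identify these invariants explicitly via the arithmetic of the Eisenstein integers $\mathcal{O}=\Z[\omega]$ with $\omega=(-1+\sqrt{-3})/2$.

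First I would verify directly that each $\vartheta_\rho$ lies in $J_{1,N}(\varepsilon^8)$. For $\alpha=a+b\omega\in\mathcal{O}$ one computes $|\alpha|^2=a^2-ab+b^2\equiv(a+b)^2\pmod 3$, so $|\alpha|^2/3\in\Z+\tfrac13$ iff $a+b\not\equiv 0\pmod 3$; on the other hand $\chi(\alpha)=\bigl(\tfrac{2a-b}{3}\bigr)$ vanishes iff $b\equiv 2a\pmod 3$, i.e.\ iff $a+b\equiv 0\pmod 3$. Hence every nonvanishing term of $\vartheta_\rho$ has $q$-exponent $\equiv\tfrac13\pmod 1$, consistent with $\varepsilon^8(T)=e^{2\pi i/3}$. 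The remaining Jacobi transformation law is a direct consequence of the Poisson summation formula applied to the rank-$2$ lattice $\mathcal{O}$ with quadratic form $|\alpha|^2/3$ and linear form $y(\alpha\rho)$.

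For the converse inclusion, I apply Theorem~\ref{thm:third-isomorphism} with $\Gamma=\Mp$, $V=\C(\varepsilon^8)$, $F=(N)$, $n=1$. Since $\varepsilon^8$ has order $3$ and is trivial on $(1,-1)$, any common multiple $m$ of $3$ and $N$ makes $\Gamma(4m)^*$ act trivially on $V$ and satisfies $4N\mid 4m$ (the level condition on $(N)$). The theorem then yields
$$J_{1,N}(\varepsilon^8)\;\cong\;\bigoplus_{\substack{l\mid m\\m/l\ \text{sq.-free}}}\Inv\bigl(W(-l)^\varepsilon\otimes W(-N)\otimes\C(\varepsilon^8)\bigr).$$
Using the observation in the footnote to Theorem~\ref{thm:third-isomorphism} that $\varepsilon$ sits inside $W(D_6)$, combined with the Witt-equivalence machinery from Section~\ref{sec:weil-representations}, the $1$-dimensional factor $\C(\varepsilon^8)$ can be absorbed into the Weil-representation framework, so each summand becomes $\Inv(M_l)^{\epsilon\times 1}$ for a single finite quadratic module $M_l$ built from $D_{-l}$, $D_{-N}$ and an auxiliary piece encoding $\varepsilon^8$.

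The key arithmetic observation is that $M_l$ carries a natural $\mathcal{O}$-module structure compatible with its quadratic form: the norm form $|\alpha|^2/3$ on the rank-$2$ lattice $\mathcal{O}$ realises $M_l$ as the quotient of $\mathcal{O}$ by an appropriate $\mathcal{O}$-ideal. Under this identification the isotropic self-dual subgroups $U\subset M_l$ are exactly the images of the principal ideals $\rho\mathcal{O}$ with $|\rho|^2=N$; by Theorem~\ref{thm:simple-invariants}, $\Inv(M_l)$ is then spanned by the corresponding indicator vectors $I_{\rho\mathcal{O}}=\sum_{x\in\rho\mathcal{O}}\pt x$. Translating these back via the explicit isomorphism of Theorem~\ref{thm:third-isomorphism} produces precisely the theta series $\vartheta_\rho$. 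The principal obstacle will be the careful identification of $\C(\varepsilon^8)$ inside a Weil representation and the subsequent recognition of $M_l$ as an $\mathcal{O}$-module, together with the verification---via the Gauss-sum formulas for $\sigma_p(D_n)$ recalled in Section~\ref{sec:weil-representations}---that the hypothesis $\sigma_p(M_l)=1$ of Theorem~\ref{thm:simple-invariants} holds; this $\sigma$-condition will single out exactly those $l$ for which $\rho\in\mathcal{O}$ with $|\rho|^2=N$ exists.
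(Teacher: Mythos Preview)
Your overall strategy matches the paper's: apply Theorem~\ref{thm:third-isomorphism} with $\C(\varepsilon^8)$ realized as the one-dimensional submodule $L_3^-(1)\subset W(L_3)$, so that each summand becomes $\Inv(M_l)$ for $M_l=D_{-l}\perp D_{-N}\perp L_3(1)$; then invoke Theorem~\ref{thm:simple-invariants} to write the surviving invariants as $I_U$ for isotropic self-dual $U\subset M_l$, and finally parameterize these $U$ by ideals $\rho\mathcal{O}$ of norm~$N$. The explicit identification of $\varepsilon^8$ and the use of Eisenstein integers are both exactly what the paper does.

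There is, however, a genuine logical gap in your elimination of the ``bad'' summands. You write that ``the hypothesis $\sigma_p(M_l)=1$ of Theorem~\ref{thm:simple-invariants} \dots\ will single out exactly those $l$''. But Theorem~\ref{thm:simple-invariants} is a one-way statement: when $|M|$ is a square and all $\sigma_p=1$, it describes $\Inv(M)$. It says \emph{nothing} when its hypothesis fails; indeed the paper explicitly warns that in general there may still be invariants. To kill the summands with $l\neq 3N$ (resp.\ $l\neq N/3$) you must use Theorem~\ref{thm:hauptvermutung-for-small-rank} (equivalently Theorem~\ref{thm:small-rank}): for $p\neq 3$ the $p$-part $M_l(p)=D_{-l}(p)\perp D_{-N}(p)$ has $\F_p$-rank at most~$2$, so $\Inv(M_l(p))\neq 0$ forces $|M_l(p)|$ to be a square and $\sigma_p(M_l)=1$. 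This is what pins down $l$ and, simultaneously, yields the congruence conditions $\bigl(\tfrac{-3}{q_p}\bigr)=1$ needed later. At the prime~$3$ the rank can be~$3$, so the small-rank theorem does not apply; the paper handles $p=3$ by a direct argument that $T$ has no eigenvalue~$1$ on $W(M_l(3))$ unless $s=+1$.

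Two smaller points. First, the $\mathcal{O}$-structure is not on $M_l$ itself: the paper first projects an isotropic self-dual $U\subset M_l$ injectively onto its first two coordinates in the degenerate module $\bigl(\Z/6N\Z\times\Z/2N\Z,\,-(x^2+3y^2)/4N\bigr)$, and it is this projection that is identified with a quotient of~$\mathcal{O}$; the third coordinate is then recovered via a homomorphism $\psi:\rho\mathcal{O}\to\Z/3\Z$. Second, your preliminary direct verification that $\vartheta_\rho\in J_{1,N}(\varepsilon^8)$ via Poisson summation is unnecessary: membership follows automatically once the isomorphism of Theorem~\ref{thm:third-isomorphism} is in place.
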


\begin{Remark}
Let $\rho=\frac{p+\sqrt{-3}q}2$ be a number in ${\mathcal O}$ with
$|\rho|^2=N$. By multiplying~$\rho$ by a suitable 6th root of unity
(which will not change $\vartheta_\rho$) we may assume that
$q\ge|p|>0$. For $q=|p|$ one has $\vartheta_\rho=0$. For $q>|p|$, one
can show by elementary transformations of the involved series
\cite{GSZ 07} that $\vartheta_\rho$ has the factorization
$$
\vartheta_\rho(\tau,z)
=-
\vartheta\big(\tau,\frac{q+p}2 z\big)
\vartheta\big(\tau,\frac{q-p}2 z\big)
\vartheta\big(\tau,q z\big)
/\eta(\tau).
$$
\end{Remark}
Similar theorems can be produced for the spaces $J_{1,N}(\varepsilon^a)$
for arbitrary even integers $a$ modulo 24. Since the analysis of the
invariants in the corresponding Weil representations becomes quite
involved this will eventually be treated elsewhere. 

Finally, we mention the case $\Gamma=\Gamma_0(l)$ acting trivially on
$V=\C(1)$. On investigating the representation $\Ind V$ occurring on
the right hand side of Theorem~\ref{thm:second-isomorphism}
it is possible to deduce the following~\cite{I-S 07}
\begin{Theorem}
\label{thm:vanishing-on-Gamma_0}
Let $F$ be a symmetric, half-integral and positive definite matrix of
size $n\times n$ with odd $n$.  Then, for every positive integer $l$,
we have
$$
J_{\frac {n+1}2,F}\big(\Gamma_0(l)\big)
=
J_{\frac {n+1}2,F}\big(\Gamma_0(l_1)\big).
$$
Here $l_1$ is the first factor in the decomposition $l=l_1l_2$,
where $l_1 |\det(2F)^\infty$ and where $l_2$ is relatively prime to
$\det(2F)$.
\end{Theorem}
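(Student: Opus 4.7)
The plan is to apply Theorem~\ref{thm:second-isomorphism} with $V=\C(1)$ to both $\Gamma_0(l)$ and $\Gamma_0(l_1)$, and to reduce the equality to the statement that certain Weil representations have the same invariants under $\up{\Gamma_0(l)}$ and under $\up{\Gamma_0(l_1)}$. Since $\Gamma_0(l)\subseteq\Gamma_0(l_1)$ gives a natural inclusion $J_{\frac{n+1}{2},F}(\Gamma_0(l_1))\hookrightarrow J_{\frac{n+1}{2},F}(\Gamma_0(l))$, it suffices to show that the two spaces have the same dimension.

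I would first choose $m$ so that the level $f$ of $F$ divides $4m$ and every prime divisor of $m$ also divides $\det(2F)$; such $m$ exists because $\mathrm{adj}(2F)=\det(2F)\cdot(2F)^{-1}$ shows $f\mid 2\det(2F)$, and because for odd $n$ the determinant $\det(2F)$ is even (one can take $m=f$, for instance). Then Theorem~\ref{thm:second-isomorphism}, combined with the projection formula $A^*\otimes_{\C[\Mp]}\Ind_{\up{\Gamma_0(N)}}^{\Mp}\C\cong (A^*)_{\up{\Gamma_0(N)}}$ and the identification of coinvariants with invariants for a finite-dimensional action factoring through a finite quotient, yields
$$
\dim J_{\frac{n+1}{2},F}\bigl(\Gamma_0(N)\bigr)
\;=\;
\sum_{\substack{l'\mid m\\ m/l'\text{ squarefree}}}
\dim\Bigl(W\bigl((l')\oplus F\bigr)^{\epsilon\times 1}\Bigr)^{\up{\Gamma_0(N)}}
\qquad (N\in\{l,l_1\}).
$$
It therefore suffices to verify, for each admissible $l'$, the equality
$$
\bigl(W((l')\oplus F)^{\epsilon\times 1}\bigr)^{\up{\Gamma_0(l)}}
\;=\;
\bigl(W((l')\oplus F)^{\epsilon\times 1}\bigr)^{\up{\Gamma_0(l_1)}}.
$$

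For this I would denote by $L$ the level of the quadratic module $(l')\oplus F$, so that $\Mp$ acts on $W((l')\oplus F)$ through the finite group $\Mp/\Gamma(L)^*$. By the choice of $m$ every prime divisor of $L$ divides $\det(2F)$, hence $\gcd(l_2,L)=1$; combined with $\gcd(l_1,l_2)=1$ this gives $\gcd(l,L)=\gcd(l_1,L)$. Using the short exact sequence $1\to\{\pm1\}\to\Mp/\Gamma(L)^*\to\SL[\Z/L\Z]\to 1$ and the fact that $(1,-1)$ belongs to both $\up{\Gamma_0(l)}$ and $\up{\Gamma_0(l_1)}$, the required equality of fixed spaces reduces to the equality of the images of $\Gamma_0(l)$ and of $\Gamma_0(l_1)$ inside $\SL[\Z/L\Z]$. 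A prime-by-prime strong approximation computation shows that the image of $\Gamma_0(N)$ in $\SL[\Z/L\Z]$ consists of all matrices $[a,b;c,d]$ with $\gcd(N,L)\mid c$, and so depends only on $\gcd(N,L)$; since $\gcd(l,L)=\gcd(l_1,L)$, the two images coincide.

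The principal technical content is the strong approximation step: given $\gamma_1=[a_1,b_1;c_1,d_1]\in\Gamma_0(l_1)$ (so $l_1\mid c_1$) one must find $\gamma\in\Gamma_0(l)=\Gamma_0(l_1)\cap\Gamma_0(l_2)$ with $\gamma\equiv\gamma_1\pmod{L}$. Left-multiplying $\gamma_1$ by $[1,0;kL,1]\in\Gamma(L)$ replaces $c_1$ by $c_1+kLa_1$; after a preliminary adjustment within $\Gamma(L)$ one may assume $\gcd(a_1,l_2)=1$, and then the Chinese remainder theorem, together with $\gcd(l_2,L)=1$ and $\gcd(l_1,l_2)=1$, produces a $k$ for which $l=l_1l_2$ divides $c_1+kLa_1$. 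This is the only step with genuine arithmetic content; everything else is functorial bookkeeping with Theorem~\ref{thm:second-isomorphism} and the induction formalism for $\Mp$-modules.
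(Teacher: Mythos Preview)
The paper does not actually prove this theorem; it only announces the result, says that it follows ``on investigating the representation $\Ind V$ occurring on the right hand side of Theorem~\ref{thm:second-isomorphism}'', and defers complete proofs to~\cite{I-S 07}. Your argument carries out precisely this investigation and is correct.

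A few remarks on the details. Your choice of $m$ (with $f\mid 4m$ and every prime of $m$ dividing $\det(2F)$) is the crux, and it is here that the hypothesis ``$n$ odd'' enters: for odd $n$ the reduction of $2F$ modulo $2$ is alternating of odd size, so $\det(2F)$ is even, and then every prime dividing the level $f\mid 2\det(2F)$ already divides $\det(2F)$. With this choice, each $W\big((l')\oplus F\big)$ has level $L=\sym{lcm}(4l',f)$ supported on the primes of $\det(2F)$, whence $\gcd(l_2,L)=1$ and $\gcd(l,L)=\gcd(l_1,L)$ as you claim. The passage
\[
\Big(W\big((l')\oplus F\big)^{\epsilon\times 1}\Big)^*\otimes_{\C[\Mp]}\Ind_{\up{\Gamma_0(N)}}^{\Mp}\C(1)
\;\cong\;
\Big(\big(W((l')\oplus F)^{\epsilon\times 1}\big)^*\Big)_{\up{\Gamma_0(N)}}
\]
is the standard projection formula, and the identification of the dimension of the coinvariants with $\dim\big(W((l')\oplus F)^{\epsilon\times 1}\big)^{\up{\Gamma_0(N)}}$ uses only that the $\Mp$-action factors through the finite group $\Mp/\Gamma(L)^*$. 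Your strong-approximation step is also fine: after the preliminary adjustment by $[1,jL;0,1]\in\Gamma(L)$ to make $\gcd(a_1,l_2)=1$ (possible since $\gcd(a_1,c_1)=1$ and $\gcd(L,l_2)=1$), the Chinese remainder theorem lets you choose $k$ with $k\equiv k_2\pmod{l_2}$ and $k\equiv 0\pmod{l_1/\gcd(l_1,La_1)}$, which suffices because $\gcd(l_1,l_2)=1$.

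In short, your proof is along the exact line the paper indicates, and nothing is missing.
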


For $n=1$, i.e.~for the case of ordinary Jacobi forms in one variable,
the theorem was already stated and proved in \cite{I-S 06}.

As immediate consequence we obtain
\begin{Corollary}
Suppose $J_{\frac {n+1}2,F}=0$. Then we
have~$J_{\frac {n+1}2,F}(\Gamma_0(l))=0$
for all $l$ which are relatively prime to
$\det(2F)$.
\end{Corollary}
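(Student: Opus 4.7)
The plan is to invoke Theorem~\ref{thm:vanishing-on-Gamma_0} directly and observe that the hypothesis on $l$ collapses the decomposition $l = l_1 l_2$ in a trivial way. Concretely, since $l_1$ is defined by the property that $l_1 \mid \det(2F)^\infty$, i.e.~every prime divisor of $l_1$ also divides $\det(2F)$, and since $l$ (hence $l_1$) is assumed coprime to $\det(2F)$, the only possibility is $l_1 = 1$ and $l_2 = l$.

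The theorem then yields
\[
J_{\frac{n+1}{2}, F}\big(\Gamma_0(l)\big) = J_{\frac{n+1}{2}, F}\big(\Gamma_0(1)\big) = J_{\frac{n+1}{2}, F},
\]
and the latter space is $0$ by the hypothesis of the corollary. There is also an implicit odd-$n$ condition inherited from Theorem~\ref{thm:vanishing-on-Gamma_0}, which should be mentioned (or observed to be vacuous when $n$ is even, since then $J_{\frac{n+1}{2}, F}$ carries a half-integral weight and the statement is to be interpreted inside the metaplectic framework).

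There is no real obstacle here; the corollary is a one-line deduction once Theorem~\ref{thm:vanishing-on-Gamma_0} is granted, and the only thing to verify is the coprimality argument that forces $l_1 = 1$. The substantive content lies entirely in Theorem~\ref{thm:vanishing-on-Gamma_0}, whose proof is deferred to \cite{I-S 07}.
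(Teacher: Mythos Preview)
Your argument is correct and matches the paper's own treatment: the paper states the corollary as an ``immediate consequence'' of Theorem~\ref{thm:vanishing-on-Gamma_0} with no further proof, and your observation that coprimality of $l$ and $\det(2F)$ forces $l_1=1$ is exactly the intended one-line deduction.
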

This corollary might be meaningful for the study of Siegel modular forms of critical
weight on subgroups $\Gamma_9(l)$. In \cite{I-S 06} we used it to prove that
there are no Siegel cusp forms of weigt one on $\Gamma_0(l)$, for any $l$.

%%%%%%%%%%%%%%%%%%%%%%%%%%%%%%%%%%%%%%%%%%%%%%%%%%%%%%%%%%%%%%%%%%%%%%%%%%%%%
\section{Proofs}
\label{sec:proofs}

In this section we append the proofs of
Theorems~\ref{thm:hauptvermutung-for-small-rank},~\ref{thm:weight-one-vanishing}
and~\ref{thm:quarks}.

We start with the description of the decomposition of Weil
representations of rank 1 modules into irreducible parts. For an odd
prime power $q=p^\alpha\ge1$ let $L_q$ be the quadratic module
$\big(\Ring{q},\frac{x^2}q\big)$. The submodule
$U:=p^{\lceil\alpha/2\rceil}L_q$ is isotropic, for its dual one finds
$U^*=p^{\lfloor\alpha/2\rfloor}L_q$ and, for $\alpha\ge 2$, the
quotient module $U^*/U$ is isomorphic (as quadratic module) to
$L_{q/p^2}$.  Hence $W(L_{q/p^2})$ embeds naturally as $\Mp$-submodule
into $W(L_q)$ (see the discussion at the end of
section~\ref{sec:weil-representations}).  Let $W_1(L_q)$ be the
orthogonal complement of $W(L_{q/p^2})$ (with respect to the $\Mp$-invariant
scalar product $(-,-)$ given by $(\pt x,\pt y)=1$ if $x=y$ and $(\pt x,\pt
y)=0$ otherwise).  Then $W_1(L_q)$ is invariant under $\Mp$ and under
$O(L_q)$. The latter group is generated by the involution
$\alpha:x\mapsto -x$. For $\epsilon=\pm1$, let $W_1^\epsilon(L_q)$ be
the $\epsilon$-eigenspace of $\alpha$ viewed as involution on
$W_1(L_q)$. Since $\alpha$ intertwines with $\Mp$ these eigenspaces
are $\Mp$-submodules of $W(L_q)$. By induction we thus obtain the
decomposition
$$
W(L_q)
=
\bigoplus_{d^2|q}
\left(W_1^+(L_{q/d^2})\oplus W_1^-(L_{q/d^2})\right)
\oplus
\begin{cases}
  W(L_1)&\text{if $q$ is a square}\\
  0&\text{otherwise}
\end{cases}
$$
of $W(L_q)$ into $\Mp$-submodules. Note that $W(L_1)\cong\C(1)$.

Finally, for an integer $a$ which is relatively prime to $q$ we let
$L_q(a)$ be the quadratic module which has the same underlying abelian
group as $L_q$ but has quadratic form
$Q_{L_q(a)}(x)=aQ_{L_q}(x)$. With $W_1^\epsilon\big(L_q(a)\big)$ being
defined similarly as before we obtain a corresponding decomposition
of $W\big(L_q(a)\big)$ as for $W(L_q)$.

Note that $\sigma\big(L_q(a)\big)=1$ if $q$ is a square, and otherwise
$\sigma\big(L_q(a)\big)=\left(\frac {-a}p\right)$ if $p\equiv 1\bmod
4$ and $\sigma\big(L_q(a)\big)=\left(\frac {-a}p\right)i$ if $p\equiv
3\bmod 4$. We may restate this as $\sigma(\big(L_q(a)\big)=\left(\frac
{-a}p\right)^\alpha\sqrt{\left(\frac {-4}p\right)^\alpha}$ for
$q=p^\alpha$.  In particular, $W\big(L_q(a)\big)$ can be viewed as
$\SL$-module. The level of $L_q(a)$ equals $q$, whence $\Gamma(q)$
acts trivially on $W\big(L_q(a)\big)$.

\begin{Lemma}
\label{lem:Lqa-properties}
The $\SL$-modules $W_1^\epsilon\big(L_q(a)\big)$ are irreducible. The
exact level of $W_1^\epsilon\big(L_q(a)\big)$ equals $q$ (i.e.~$q$ is
the smallest positive integer such that $\Gamma(q)$ acts trivially on
$W_1^\epsilon\big(L_q(a)\big)$). Two $\Mp$-modules
$W_1^{\epsilon}\big(L_q(a)\big)$ and
$W_1^{\epsilon'}\big(L_{q'}(a')\big)$ are isomorphic if and only if
$q=q'$, $\epsilon=\epsilon'$ and $aa'$ is a square module $p$.
\end{Lemma}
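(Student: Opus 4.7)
The plan is to extract the $\Mp$-module structure of $W_1^\epsilon\big(L_q(a)\big)$ directly from the explicit formulas $(T,w_T)\pt x=\e_q(ax^2)\pt x$ and $(S,w_S)\pt x=\sigma\,q^{-\frac12}\sum_{y\in\Ring{q}}\e_q(-2axy)\,\pt y$. I would first describe a concrete basis. Using the orthogonal-complement definition of $W_1\big(L_q(a)\big)$ with respect to the symmetric pairing $(\pt x,\pt y)=\delta_{x,y}$, one verifies that $W_1\big(L_q(a)\big)$ is spanned by the $\pt x$ with $x\notin U^{*}=p^{\lfloor\alpha/2\rfloor}\Ring{q}$ together with the combinations $\sum_{z\in U}c_z\,\pt{y+z}$ ($y\in U^{*}$, $\sum_z c_z=0$). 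Intersecting with the $\epsilon$-eigenspace of $\pt x\mapsto\pt{-x}$ then produces a manageable basis indexed by pairs $\{x,-x\}$.

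Given such a basis, irreducibility follows from the interaction between $(T,w_T)$ and $(S,w_S)$. Any nonzero $\Mp$-submodule $W\subseteq W_1^\epsilon$ must be a direct sum of $T$-eigenspaces, and each $T$-eigenspace inside $W_1^\epsilon$ is spanned by the $\pm$-symmetrizations of the $\pt x$ lying in a single fibre of $x\mapsto ax^2\bmod q$, which is already a coset of $\{\pm 1\}$. Picking any nonzero $v\in W$, applying $(S,w_S)$, and expanding in the $T$-eigenbasis, one shows (using non-vanishing of the resulting Gauss-sum-type matrix coefficients for odd~$p$) that every other $T$-eigenline in $W_1^\epsilon$ appears in the $\Mp$-orbit of $v$, so $W=W_1^\epsilon$. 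This is the same mechanism used in \cite[Thm.~1.8]{Sko 85} and \cite{N-W 76}, just restricted to the $\epsilon$-eigenspace.

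For the level, $\Gamma(q)$ already acts trivially on $W\big(L_q(a)\big)$, hence on the submodule $W_1^\epsilon$; to rule out any smaller level it suffices to exhibit a vector on which $T^{q'}$ acts nontrivially for every proper divisor $q'$ of $q$. The vector $\pt 1+\epsilon\,\pt{-1}\in W_1^\epsilon\big(L_q(a)\big)$ does this since its $T$-eigenvalue is $\e_q(a)$, of exact order $q$ because $\gcd(a,q)=1$. For the isomorphism criterion, different $q$'s give different levels, and for equal $q$ but $\epsilon\neq\epsilon'$ the dimensions differ -- for instance, at $\alpha=1$ one has $\dim W_1^{+}=\tfrac{p+1}{2}$ and $\dim W_1^{-}=\tfrac{p-1}{2}$, and an analogous fixed-point count for $\pt x\mapsto\pt{-x}$ handles higher $\alpha$. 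Finally, for $q=q'$ and $\epsilon=\epsilon'$, the multiset of $T$-eigenvalues on $W_1^\epsilon\big(L_q(a)\big)$ is $\{\e_q(ax^2)\}_{x}$, which depends on $a$ only through its square class in $(\Ring{q})^{\times}$; by Hensel's lemma this square class coincides with the square class of $a\bmod p$, so the condition ``$aa'$ a square mod $p$'' is necessary. Conversely, if $a'\equiv au^2\bmod q$ with $u\in(\Ring{q})^{\times}$, then $\pt x\mapsto\pt{ux}$ is an $\Mp$-equivariant isomorphism $W\big(L_q(a)\big)\xrightarrow{\sim}W\big(L_q(a')\big)$ preserving both the action of $\pt x\mapsto\pt{-x}$ and the embedded $W(L_{q/p^2})$, hence restricts to an isomorphism of the $\epsilon$-components.

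I expect the irreducibility step to be the main obstacle: while it is the expected specialization of the general results of~\cite{N-W 76}, one has to keep track of the $(S,w_S)$-matrix coefficients \emph{within} the constrained subspace $W_1^\epsilon$ rather than inside $W\big(L_q(a)\big)$, and in particular check that no stray $\Mp$-invariant subspace can hide among the $\sum_z c_z=0$ combinations that appear for $\alpha\geq 2$. Once irreducibility is secured, the level and isomorphism assertions both drop out of the $T$-spectrum analysis with relatively little extra work.
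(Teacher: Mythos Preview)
Your argument has two genuine gaps, both appearing once $\alpha\ge 2$.

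First, the irreducibility step rests on the claim that each $T$-eigenspace inside $W_1^\epsilon\big(L_q(a)\big)$ is a single line because the fibre of $x\mapsto ax^2\bmod q$ is ``already a coset of $\{\pm1\}$''. This is true only for $x\in(\Ring{q})^\times$. For $x$ divisible by $p$ the fibres are larger: e.g.\ when $q=p^2$ the fibre over $0$ is all of $p\,\Ring{p^2}$, and after passing to $W_1^\epsilon$ the corresponding $T$-eigenspace for the eigenvalue $1$ still has dimension $(p-1)/2$, which exceeds $1$ as soon as $p\ge 5$. (For $q=p^3$ the eigenvalue $\e_q(ap^2)$ produces an eigenspace of dimension $p-1$ in $W_1^\epsilon$.) So the reduction to ``move between $T$-eigenlines via $(S,w_S)$'' does not go through as stated; you would need an additional mechanism to split these higher-dimensional $T$-eigenspaces. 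The paper sidesteps this by a counting argument: the decomposition of $W\big(L_q(a)\big)$ already exhibits $\sigma_0(q)$ nonzero summands, while the total number of irreducible constituents equals $\dim\Inv\big(L_q(a)\perp L_q(-a)\big)$, computed to be $\sigma_0(q)$ either via Theorem~\ref{thm:simple-invariants} or by recognising $W\big(L_q(a)\big)\otimes W\big(L_q(a)\big)^c$ as the permutation module on row vectors in $(\Ring{q})^2$ and counting $\SL$-orbits.

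Second, your dimension argument for distinguishing $\epsilon$ from $\epsilon'$ breaks down beyond $\alpha=1$. A direct fixed-point count gives
\[
\dim W_1^{+}\big(L_q(a)\big)=\dim W_1^{-}\big(L_q(a)\big)=\tfrac12\big(q-q/p^{2}\big)
\]
for every $\alpha\ge 2$, so the two pieces have \emph{equal} dimension and cannot be told apart this way. The paper instead uses that the central element $-1=S^2$ acts on $W\big(L_q(a)\big)$ by $\pt x\mapsto \sigma^2\,\pt{-x}$, hence on $W_1^\epsilon$ as the scalar $\sigma^2\epsilon$; since $\sigma=\sigma\big(L_q(a)\big)$ is determined once $q$ and the square class of $a$ are fixed, the scalar by which $S^2$ acts recovers $\epsilon$. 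Your treatments of the exact level and of the square-class condition on $a,a'$ are correct and agree with the paper's.
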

\begin{proof}
It is easy to see that the modules $W_1^\epsilon\big(L_q(a)\big)$ are
nonzero. Thus the given decomposition of $W\big(L_q(a)\big)$ contains
$\sigma_0(q)$ non zero parts. Hence, for proving the irreducibility
of these parts is suffices to show that the number of irreducible
components of $W\big(L_q(a)\big)$ is exactly $\sigma_0(q)$. But the
number of irreducible components equals the number of invariant elements
of~$W\big(L_q(a)\big)\otimes W\big(L_q(-a)\big)$. i.e.~the dimension of
$\Inv\big(L_q(a)\perp -L_q(a)\big)$. The latter space is spanned by the elements
$I_U:=\sum_{x\in U}\pt x$, where $U$ runs through the isotropic
self-dual modules of $L_q(a)\perp -L_q(a)\big)$
(cf.~Theorem~\ref{thm:simple-invariants}).

Alternatively, for avoiding the use of
Theorem~\ref{thm:simple-invariants}, which we did not prove in this
article, (but see \cite[Theorem 5.5.7]{N-S-R 07}), one can show that
$W\big(L_q(-a)\big)=\Inv\big(L_q(a)\perp -L_q(a)\big)$ is isomorphic
to the permutation representation of $\SL$ given by the right action of
$\SL$ on the row vectors of length 2 with entries in $\Ring{q}$
(see~e.g.~\cite[\S~3]{N-W 76}). The number of invariants equals thus
the number of orbits of this action, which in turn are naturally parameterized
by the divisors of $q$ (all vectors $(x,y)$ with $\sym{gcd}(x,y,q)=d$,
for fixed $d|q$ constitute an orbit).

We already saw that $\Gamma(q)$ acts trivially on
$W_1^\epsilon\big(L_q(a)\big)$. That $q$ is the smallest integer with
this property follows from the fact that $\e(a/q)$ is an eigenvalue of
$T$ acting on $W_1^\epsilon\big(L_q(a)\big)$. The latter can directly
be deduced from the definition of $W_1^\epsilon\big(L_q(a)\big)$ (see
also~\cite[p.~33]{Sko 85} for details of this argument).

Finally, if $W_1^{\epsilon}\big(L_q(a)\big)$ and
$W_1^{\epsilon'}\big(L_{q'}(a')\big)$ are isomorphic then, by
comparing the levels of the $\SL$-modules in question, we conclude
$q=q'$. The eigenvalues of $T$ on each of these spaces are of the form
$\e_q(ax^2)$ and $\e_q(a'x^2)$, respectively, for suitable integers
$x$, with at least one $x$ relatively prime to $q$; whence $aa'$ must be
a square modulo $q$. But then  $W_1^{\epsilon'}\big(L_{q'}(a')\big)$,
(and hence, by assumption, $W_1^{\epsilon}\big(L_{q}(a)\big)$), is
isomorphic to $W_1^{\epsilon'}\big(L_{q}(a)\big)$. This finally
implies $\epsilon=\epsilon'$, since $-1=S^2$ acts by $\pt x\mapsto
\sigma^2\pt{-x}=\sigma^2(\alpha\cdot\pt x)$ on $L_q(a)$ (where
$\sigma=\sigma\big(L_q(a)\big)$) as follows from the explicit formula
for the action of $S$.
\end{proof}

Similarly, we can decompose the representations associated to the
modules $D_{2^\alpha}$ or, more generally, to the modules
$D_{2^\alpha}(a):=\big(\Ring{2q},{ax^2}/{4q}\big)$, where $a$ is an
odd number and $q=2^\alpha\ge1$. As before, $W\big(D_{q/4}(a)\big)$
embeds naturally into $W\big(D_{q}(a)\big)$ and we define
$W_1^\epsilon\big(D_{q}(a)\big)$ analogously to
$W_1^\epsilon\big(L_{q}(a)\big)$. Again, $W\big(D_{2^\alpha}(a)\big)$
decomposes as direct sum of all the modules
$W_1^\epsilon\big(D_{2^\alpha/d^2}(a)\big)$ (plus $W(D_1)$ if
$2^\alpha$ is square) with $d^2$ running through the square divisors
of $2^\alpha$ and~$\epsilon=\pm 1$. Moreover, the preceding lemma still
holds true with suitable modifications when  replacing $L_q(a)$ by
$D_{2^\alpha}(a)$. Note, however, that $D_{2^\alpha}(a)$ does not
factor through a representation of $\SL$ since
$\sigma\big(D_{2^\alpha}(a)\big)=\e_8(-a)$.\footnote{%
For $\alpha=0$,
this formula holds only true for $a=\pm1$, which, however, can always
be assumed without loss of generality.
}
\begin{Lemma}
Let $q$ be a power of $2$.
The $\SL$-modules $W_1^\epsilon\big(D_q(a)\big)$ are irreducible. The
exact level of $W_1^\epsilon\big(D_q(a)\big)$ equals $4q$ (i.e.~$4q$ is
the smallest positive integer such that $\Gamma(q)^*$ acts
trivially). Two $\Mp$-modules $W_1^{\epsilon}\big(D_q(a)\big)$ and
$W_1^{\epsilon'}\big(D_{q'}(a')\big)$ are isomorphic if and only if
$q=q'$, $\epsilon=\epsilon'$ and $aa'$ is a square modula $4q$.
\end{Lemma}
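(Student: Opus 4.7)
The proof parallels that of Lemma~\ref{lem:Lqa-properties}, with the principal adjustment being that $W(D_q(a))$ is a genuine $\Mp$-module rather than an $\SL$-module: since $\sigma(D_q(a)) = \e_8(-a)$ is in general an eighth, not a fourth, root of unity, the representation factors through $\Mp/\Gamma(4q)^*$ and not through $\SL/\Gamma(q)$. As a preliminary step I would verify that each summand $W_1^\epsilon(D_{q/d^2}(a))$ in the decomposition recorded just before the lemma is nonzero, by exhibiting an explicit eigenvector of $\alpha$ not lying in the embedded submodule $W(D_{q/4d^2}(a))$.

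For irreducibility and pairwise non-isomorphism I would count in two ways. Let $N$ denote the total number of nonzero summands in the decomposition of $W(D_q(a))$. By complete reducibility and Schur's lemma,
\[
\dim\Hom_{\Mp}\bigl(W(D_q(a)),W(D_q(a))\bigr) \;\geq\; N,
\]
with equality precisely when every summand is irreducible and the $N$ summands are pairwise non-isomorphic, so it suffices to establish the matching upper bound. Using the identification $W(D_q(a))^c \cong W(D_q(-a))$ from section~\ref{sec:weil-representations}, this dimension equals $\dim\Inv\bigl(D_q(a)\perp D_q(-a)\bigr)$; I would compute the latter either by invoking Theorem~\ref{thm:simple-invariants} (whose hypotheses are met since the orthogonal sum is hyperbolic and satisfies $\sigma_2=1$), so that the invariants are spanned by the $I_U$ for isotropic self-dual subgroups~$U$, or, to stay self-contained, by identifying $W\bigl(D_q(a)\perp D_q(-a)\bigr)$ with a permutation $\Mp$-module on row vectors in $(\Ring{4q})^2$ and counting orbits, paralleling the alternative argument cited from~\cite{N-W 76}. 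Both routes should yield exactly $N$.

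It remains to verify the level and the isomorphism classification. That $\Gamma(4q)^*$ acts trivially is just the statement that $4q$ is the level of $D_q(a)$; conversely, $T$ acts on $\pt x$ by $\e_{4q}(ax^2)$, and taking $x$ an odd generator gives the eigenvalue $\e_{4q}(a)$, which has exact order $4q$ because $a$ is odd, so no $\Gamma(m)^*$ with $m<4q$ can act trivially on the top piece. For the classification, equality of levels in an isomorphism $W_1^\epsilon(D_q(a))\cong W_1^{\epsilon'}(D_{q'}(a'))$ forces $q=q'$; equality of the multisets of $T$-eigenvalues $\{\e_{4q}(ax^2)\}$ then forces $aa'$ to be a square modulo~$4q$; and finally, having reduced to the case $a=a'$, the scalar $\sigma^2\epsilon$ by which $S^2$ acts on $W_1^\epsilon(D_q(a))$ (computed from $S^2\pt x = \sigma^2\pt{-x}$ via the explicit formula for $S$) detects $\epsilon$. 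The most delicate step in this scheme is the orbit-counting identification of $W\bigl(D_q(a)\perp D_q(-a)\bigr)$ with a permutation module, since in the 2-adic metaplectic setting the genuine eighth root $\sigma$ and the passage from $\SL$ to $\Mp$ must be tracked with care; this is the point at which the argument truly departs from its odd-prime analogue.
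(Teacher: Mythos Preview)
Your proposal is correct and follows exactly the approach the paper indicates: the paper's own proof reads ``The proof is essentially the same as for the preceding lemma and we leave it to the reader,'' and what you have written is a faithful, more detailed unpacking of the argument of Lemma~\ref{lem:Lqa-properties} with the necessary metaplectic adjustments. Your flagging of the $2$-adic orbit-count as the one place requiring extra care is apt, but there is no methodological difference from the paper.
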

The proof is essentially the same as for the preceding lemma and we leave
it to the reader.

Note that the modules $D_m$, for arbitrary nonzero $m$, and, more
generally, the modules $D_m(a):=\big(\Ring{2m},ax^2/4m\big)$, for $a$
relatively prime to $m$, have as $p$-parts modules of the form $L_q(a')$ and
$D_{2^\alpha}(a'')$, whence $W\big(D_m(a)\big)$ is isomorphic to the
outer ternsor product of $\SL/\Gamma(q)$-modules $W\big(L_q(a')\big)$
and a suitable $\Mp/\Gamma(4\cdot 2^\alpha)^*$-module
$W\big(D_{2^\alpha}(a'')\big)$. We conclude that the $\Mp$-module
$W\big(D_m(a)\big)$ decomposes as
$$
W\big(D_m(a)\big)
\cong
\bigoplus_{\begin{subarray}{c}fd^2|m\\
f \text{squarefree}\end{subarray}}
W_1^f\big(D_{m/d^2}(a)\big)
.
$$
Here $W_1^f\big(D_{m}(a)\big)$ is the subspace of all $v=\sum_{x\in
D_m}\lambda(x)\pt x$ in $W\big(D_m(a)\big)$ such that, for all
isotropic submodules $U$ of $D_m(a)$ and all $y\in U^*$, one has
$\sum_{x\in y+U}\lambda(x)=0$, and such $gv=\chi_f(g)v$ for all $g$ in
$O(m)$. Here $\chi_f$ denotes that character of $O(m)$ which maps
$g_p$ to $-1$ if $p|f$ and to $+1$ otherwise, where $g_p$ is the
orthogonal map which corresponds to the residue class $x$ in $\Ring{2m}$
(under the correspondence described in
section~\ref{sec:weil-representations}) which satisfies $x\equiv
-1\bmod 2p^\alpha$ and $x\equiv +1\bmod 2m/p^\alpha$ with $p^\alpha$
being the exact power of $p$ dividing $m$.  Note that, for a fixed
$U$, the vanishing condition   can be restated as $v$ being orthogonal
to the image of the quadratic module $U^*/U$ under the embedding of
$W(U^*/U)$ into $W\big(D_m(a)\big)$ and with respect to the scalar
product as described above.  Note also that $U_d:=\frac{m}d D_m$ runs
through all isotropic subgroups of $D_m$ if $d$ runs through all
positive integers whose square divides $m$, and that, for such $d$,
the quadratic module $U_d^*/U_d$ is isomorphic to $D_{m/d^2}$.

The decomposition of $W(m)$ was already given (and essentially deduced by
the same methods as explained here) in~\cite[Theorem 1.8, p.~22]{Sko 85}.

After these preparations we can now give the proofs of
Theorems~\ref{thm:hauptvermutung-for-small-rank}
and~\ref{thm:weight-one-vanishing}.

\begin{proof}[Proof of Theorem~\ref{thm:hauptvermutung-for-small-rank}]
Suppose first of all that $p$ is odd.  The assumptions on~$M$ imply a
decomposition $M\cong L_{p^\alpha}(a)\perp L_{p^\beta}(b)$ with, say,
$0 \le \alpha \le \beta$ \cite[Satz~3]{N 76}.  If $|M|$ is a square
and $\sigma(M)=1$ then $\alpha$, $\beta$ are both even or they are
both odd and $\left(\frac{-ab}p\right)(=\sigma(M))=1$. In the first case
$M$ contains the trivial submodule as follows immidiately from
Lemma~\ref{lem:Lqa-properties}.  In the second case we may assume
$b=-a$; but then, as follows again from
Lemma~\ref{lem:Lqa-properties}, $A:=W\big(L_{p^\alpha}(a)\big)$ is a
$\SL$-submodule of $B:=W\big(L_{p^\beta}(-b)\big)$, whence $W(M)\cong
A\otimes B^c$ contains nonzero invariants.

Conversely, if $\alpha$ and
$\beta$ do not have the same parity, or if $-ab$ is not a square
modulo $p$, then by Lemma~\ref{lem:Lqa-properties}, $A$ and $B$ have
no irreducible representation in common, whence $W(M)\cong A\otimes
B^c$ contains no invariants.

If $p=2$ and $M\cong D_{2^\alpha}(a)\perp D_{2^\beta}(b)$ with, say,
$\alpha\le \beta$, then $\sigma(M)=\e_8(-a-b)$ (using the convention
that $a$ or $b$ is chosen to be $\pm1$ if $\alpha$ or $\beta$ are
equal to zero, respectively). Hence $\sigma(M)=1$ if and only if
$b\equiv -ax^2\bmod 4\cdot 2^\beta$ for some $x$.  We can therefore
follow the same line of reasoning as before to deduce the claim.

Finally, if $p=2$ and $M$ is not of the form as in the preceding
argument then~\cite[Satz~4]{N 76}
$$
M\cong \big((\Ring{2^\alpha})^2,\frac{xy}{2^\alpha}\big)
\quad\text{or}\quad
M\cong\big((\Ring{2^\alpha})^2,\frac{x^2+xy+y^2}{2^\alpha}\big)
.
$$
In particular, $|M|$ is a square.  In the first case $\sigma(M)=1$
and $W(M)$ possesses an invariant, namely $I_U=\sum_{x\in U}\pt x$,
where,~e.g.~$U=\Ring{2^\alpha}\times 0$.  In the second case
$\sigma(M)=(-1)^\alpha$. If $\alpha$ is even then $I_V=\sum_{x\in
V}\pt x$, for $V=2^{\alpha/2}M$, is an invariant. If $\alpha$ is odd
then  $W(M)$ possesses no invariants (see the complete decomposition
of $W(M)$ in~\cite[pp.~519]{N-W 76}, which is called $N_\alpha$
loc.~cit.).  This completes the proof of the theorem.
\end{proof}

\begin{proof}[Proof of Theorems~\ref{thm:weight-one-vanishing} and \ref{thm:quarks}]
The $\Mp$-module $L^{-}_3(s)$ ($s=\pm1$) is one-dimen\-sional, and $T$
acts on it by multiplication with $e_3(s)$. Hence the character
afforded by $L^{-}_3(s)$ equals $\epsilon^{8s}$. By
Theorem~\ref{thm:third-isomorphism} the space $J_{1,N}$ embeds
injectively into the direct sum of the spaces $\Inv(M_l)$, where
$$
M_l:=D_{-l}\perp D_{-N}\perp L_3(s)
$$
and where $l$ runs through the divisors of $N':=\sym{lcm}(3,N)$
such that $N'/l$ is squarefree.

Suppose that $\Inv(M_l)\not=0$. Since
$\Inv(M_l)=\bigotimes_k\Inv\big(M_l(p)\big)$, where, $M_l(p)$, for a
prime $p$, denotes the $p$-part of $M_l$, we conclude
$\Inv(M_l(p))\not=0$. For $p\not=3$ this implies, by
Theorem~\ref{thm:small-rank}, that the $p$-parts of $|M_l|=3Nl$ are
perfect squares and that $\sigma_p(M_l)=1$. From the first condition
and since $N'/l$ is squarefree we conclude $l=N'$ or $l=N'/3$. But
$\sigma_2(M_l)=e_8(-l/q -N/q)\left(\frac{lN/q^2}{2q}\right)$, where
$q$ is the exact power of $2$ dividing $N$, and this is real if only
if $ln/q^2$ is not a square mod 4. Thus $l=N/3$ or $l=3N$ accordingly
as $3$ divides $N$ or not. For this $l$ and $p\not=3$ we have
$\sigma_p(M_l)=\left(\frac{-3}{q_p}\right)$, where $q_p$, for any $p$,
denotes the exact power of $p$ dividing $N$. Since $\sigma_p(M)=1$ we
find that $\left(\frac{-3}{q_p}\right)=1$ for all $p\not=3$. In
particular, $N/q_3\equiv +1 \bmod 3$.  Finally, if, say, $3|N$ and
$l=3N$, we find
$$
M_l(3)=\big(\Ring{3q_3}\times\Ring{3q_3}\times\Ring{q_3},\frac{-N/q_3x^2-3N/q_3y^2+q_3
s z^2}{3q_3}\big) .
$$
If $N/q_3\equiv +1\bmod 3$ then, for $s=-1$, this quadratic module
contains no nontrivial isotropic element, hence $T$ does not afford
the eigenvalue 1 on $W\big(M_l(3)\big)$. Hence $\Inv(M_l(3))\not=0$
implies $s=+1$.  The same holds true, by a similar argument, if $N$ is
not divisible by $3$. Note that, for $s=+1$ and $N/q_3\equiv +1\bmod
3$ we have $\sigma_3(M_l)=1$.

Summing up we thus have proved that $J_{1,N}(\varepsilon^{8s})=0$
unless $s=+1$ and~$\left(\frac{-3}{q_p}\right)=1$ for all
$p\not=3$. Moreover, if the latter conditions hold true then
$$
J_{1,N}(\varepsilon^{8})\cong
\begin{cases}
  \Inv(M_{3N})^{+,-}&\text{if }3\not|N\\
  \Inv(M_{N/3})^{+,-}&\text{if }3|N
\end{cases}
.
$$
Here the superscript indicates the subspace of all elements in
$W(M_{3N})$ resp. $W(M_{N/3})$ which are even in the first and odd in
the third argument.

By Theorem~\ref{thm:simple-invariants} the space $\Inv(M_{3N})$ is
spanned by the special elements $\pt U=\sum_{m\in U}\pt m$, where $U$
runs through the isotropic self-dual subgroups of $M_{3N}$.
Accordingly, $J_{1,N}(\varepsilon^{8})$ is the spanned by the Jacobi
forms
$$
\vartheta_U(\tau,z)
:=
\sum_{(x,y,z)\in U}
\vartheta_{3N,x}(\tau,0)\,\vartheta_{N,y}(\tau,z)
\left(\frac z3\right) .
$$
Here we used the application of Theorem~\ref{thm:third-isomorphism}
(and the identification $ L_3^-\xrightarrow{\sim}\C(\varepsilon^8)$
given by $\pt z-\pt{-z} \mapsto \left(\frac z3\right)$).

The statements of the last paragraph still hold true if $N$ is
divisible by $3$. In fact, a literal application of
Theorem~\ref{thm:third-isomorphism} and the preceding considerations
imply that $J_{1,N}(\varepsilon^{8})$ is spanned by the the Jacobi
forms which are given by the same formula as the $\vartheta_U$ but
with $\vartheta_{3N,x}$ replaced by $\vartheta_{N/3,x}$ and where $U$
runs through the isotropic self-dual subgroups of $M_{N/3}$. But the
quadratic module $D_{-N/3}$ is isomorphic to $X^*/X$ (via $x\mapsto
3x+X$), where $X^*=3\Ring{6N}$. This map induces an embedding of
$\Mp$-modules $W(-N/3)\rightarrow W(-3N)$ (via $(x)\mapsto
\sum_{y\equiv 3x\bmod 2N}(y)$; see end of
section~\ref{sec:weil-representations}), and it induces a map
$U\mapsto U'$ from the set of isotropic self-dual subgroups of
$M_{n/3}$ into the set of isotropic self-dual subgroups of $M_{3N}$
(via pullback). Note that this map is one to one
(since~$-x^2-3y^2+4Nz^2 \equiv 0 \bmod 12N$ implies $3|x$). Finally, the
diagram formed by the maps $\Inv(M_{N/3})\ni U\mapsto e_U\mapsto
\vartheta_U$ and $\Inv(M_{N/3})\ni U\mapsto U'\mapsto  \vartheta_{U'}$
commute as follows on using the formulas
$$
\vartheta_{N/3,x} = \sum_{y\equiv 3x\bmod 2N} \vartheta_{3N,y} .
$$

It remains to determine, for arbitrary $N$, the isotropic self-dual
subgroups $U$ of $M_{3N}$. The map $U\ni (x,y,z)\mapsto (x,y)$ is
injective (since~$-\frac{x^2+3y^2}{4N}+z^2\in 3\Z$) and thus maps $U$
onto an isotropic subgroup $U'$ of the (degenerate) quadratic module
$M:=\big(\Ring{6N}\times\Ring{2N},-\frac{x^2+3y^2}{4N}\big)$ of order
$|U'|=6N$. For determining the set $S$ of isotropic subgroups $U'$ of
$M$ whose order is $6N$ let $\mathcal O$ be the maximal order of
$\Q\big(\sqrt{-3}\big)$. The image of the map ${\mathcal O}\ni \alpha
=\frac{x+\sqrt{-3}y}2\mapsto
\big(x+6N\Z,y+2N\Z\big)$ contains the
isotropic vectors of~$M$ (since $\frac{x^2+3y^2}{4N}\in\Z$ implies
that $x$ and $y$ have the same parity) and its kernel equals
$2\sqrt{-3}N\Z[\sqrt{-3}]$. The $U'$ in $S$ thus correspond to the
subgroups $2\sqrt{-3}N\Z[\sqrt{-3}]\subset I\subset{\mathcal O}$ of
index $N$ in ${\mathcal O}$ and such that $N||\alpha|^2$ for all
$\alpha\in I$. Let $I$ be such a subgroup and let $I'$ be the
${\mathcal O}$-ideal generated by $I$. Clearly $N||\alpha|^2$ for all
$\alpha$ in $I'$. Since $I'={\mathcal O}\rho$ for some $\rho$ we find
$N||\rho|^2$ and $|\rho|^2\le N$. But then $|\rho|^2=N$ and
$I'=I$. Thus all isotropic self-dual subgroups $U$ of $M_{3N}$ are of
the form
$$
U
=
\left\{ \big(x(\alpha)+6N\Z,y(\alpha)+2N\Z,\psi(\alpha)\big) :
\alpha\in {\mathcal O}\rho\right\},
$$
where ${\mathcal O}\rho$ is an ideal of norm $N$, where $\psi$
is a group homomorphism ${\mathcal O}\rho\rightarrow \Ring{3}$ such
that $-|\alpha|^2/N+\psi(\alpha)^2\in 3\Z$, and where we use
$x(\alpha)=\alpha+\overline\alpha$, $y(\alpha)=(\alpha-\overline\alpha)/\sqrt{-3}$.
  It is easily checked that
the only possible $\psi$ are $\psi(\alpha)=\epsilon x(\alpha/\rho)$
($\epsilon=\pm 1$).  In view of the formula for $\vartheta_U$ it
suffices to consider only those $U=U_\rho$ where
$\psi(\alpha)=x(\alpha/\rho)$. If we write $\vartheta_\rho$ for
$\vartheta_U$ then by a simple calculation
$$
\vartheta_\rho = \sum_{\alpha \in {\mathcal O}}
\left(\frac{x(\alpha)}3\right)\,q^{|\alpha|^2/3}\zeta^{y(\alpha\rho)} ,
$$
We have thus have proved that $J_{1,N}(\varepsilon^{8})$ is indeed spanned
by the series $\vartheta_\rho$ as stated in Theorem~\ref{thm:quarks},
where ${\mathcal O}\rho$ runs through
the ideals of norm $N$ in~${\mathcal O}$.
\end{proof}

%%%%%%%%%%%%%%%%%%%%%%%%%%%%%%%%%%%%%%%%%%%%%%%%%%%%%%%%%%%%%%%%%%%%%%%%%%%%%

%%%%%%%%%%%%%%%%%%%%%%%%%%%%%%%%%%%%%%%%%%%%%%%%%%%%%%%%%%%%%%%%%%%%%%%%%%%%%
\bigskip
\begin{flushleft}
Nils-Peter Skoruppa\\
Fachbereich Mathematik,
Universit\"at Siegen\\
Walter-Flex-Stra{\ss}e 3,
57068 Siegen, Germany\\
http:/\hskip-3pt/www.countnumber.de\\
\end{flushleft}


\begin{thebibliography}{GSZ 07}

\bibitem[Co-S 88]{Co-S 88}
  Conway,J.~H.~and Sloane, N.~J.~A.,
  Sphere Packings, Lattices and Groups.
  Grundlehren~290, Springer 1988

\bibitem[Eh-S 95]{Eh-S 95}
  Eholzer, W.~and Skoruppa, N-P.,
  Modular invariance and uniqueness of conformal characters,
  Commun.~Math.~Phys.~174 (1995), 117--136

\bibitem[E-Z 85]{E-Z 85}
  Eichler, M.~and Zagier, D.,
  The Theory of Jacobi Forms.
  Birk\-h\"auser, 1985

\bibitem[Fr 91]{Fr 91}
  Freitag, E.,
  Singular Modular Forms and Theta Relations,
  LNM 1487, Springer, 1991

\bibitem[GSZ 07]{GSZ 07}
  Gritsenko, V., Zagier, D., and Skoruppa, N-P.,
  The theory of thetablocks,
  in preparation

\bibitem[I-S 06]{I-S 06}
  Ibukiyama, T.~and Skoruppa, N-P.,
  On Jacobi and Siegel modular forms of weight 1. Appendix to
  Cris Poor and David Yuen,
  Dimensions of cusp forms for $\Gamma_0(p)$ in degree two and small weights,
  to appear

\bibitem[I-S 07]{I-S 07}
  Ibukiyama, T.~and Skoruppa, N-P.,
  Siegel and Orthogonal Modular Forms of Critical Weight,
  in preparation

\bibitem[Kl 46]{Kl 46}
  Kloosterman, H.~D.,
  The behaviour of general theta functions
  under the modular group and the characters
  of binary modular congruence groups I, II,
  Ann.~of Math.~47 (1946), 317--447

\bibitem[N 76]{N 76}
  Nobs, A.,
  Die irreduziblen Darstellung\-en der Gruppe
  $\SL[\Z_p]$, insbesondere $\SL[\Z_2]$ I,
  Comment.~Math.~Helvetici 39 (1976), 465--490

\bibitem[N-W 76]{N-W 76}
  Nobs, A., Wolfart, J.,
  Die irreduziblen Darstellung\-en der Gruppe
  $\SL[\Z_p]$, insbesondere $\SL[\Z_2]$ I, II,
  Comment.~Math.~Helvetici 39 (1976), 491--526 

\bibitem[N-S-R 07]{N-S-R 07}
  Nebe G., Sloane, N.~J.~A. and Rains E.~M.,
  Self-Dual Codes and Invariant Theory.
  Algorithms and Computation in Mathematics,
  Springer 2007, to appear

\bibitem[Re 75]{Re 75}
  Resnikoff, H.~L.,
  Automorphic forms of singular weight are singular forms,
  Math.~Ann.~215 (1975), 173--193

\bibitem[Sch 84]{Sch 84}
  Scharlau, W.,
  Quadratic and Hermitian Forms.
  Grundlehren 270, Springer, Berlin 1985

\bibitem[Se-S 77]{Se-S 77}
  Serre, J-P.~and Stark H.,
  Modular forms of weight $1/2$. in
  Modular Functions of one variable VI, Lecture Notes 627,
  Springer, 1977

\bibitem[Sko 85]{Sko 85}
  Skoruppa N-P.,
  \"Uber den Zusammenhang zwischen
  Jacobiformen und Modulformen halbganzen Gewichts,
  Bonner Mathematische Schriften Nr.~159,
  Bonn 1985.

\bibitem[Sko 07]{Sko 07}
  Skoruppa N-P.,
  On Weil Representations Associated to Finite Quadratic Modules,
  in preparation

\bibitem[Ta 67]{Ta 67}
  Tanaka, S.,
  Irreducible representations of the binary congruence groups mod $p^\lambda$,
  J.~Math.~Kyoto Univ.~7-2 (1967), 123--132

\bibitem[We 92]{We 92}
  Weissauer, R,,
  Modular forms of genus 2 and weight 1,
  Math.~Z.~210 (1992), 91--96

\end{thebibliography}
\end{document}